\documentclass[aip,jmp,author-year,reprint,eqsecnum]{revtex4-1}
\usepackage{natbib}
\usepackage{amsmath,amsfonts,amsthm,amssymb,amsbsy}
\usepackage{dsfont}
\usepackage{hyperref}
\def\e{\mathrm e}
\def\R{\mathbb R}
\def\C{\mathbb C}
\def\N{\mathbb N}
\newtheorem{thm}{Theorem}[section]
\newtheorem{teom}{Theorem}
\newtheorem{lem}{Lemma}
\newtheorem{cor}[lem]{Corollary}
\newtheorem{defi}{Definition}
\newtheorem{remark}[thm]{Remark}
\begin{document}
\title{Zero resonances for localised potentials}
\author{V. Georgiev}
\affiliation{Department of Mathematics; University of Pisa; 
Italy} \email{georgiev@dm.unipi.it}
\affiliation{Faculty of Science and Engineering; Waseda University;  
Tokyo 169-8555, Japan}
\affiliation{IMI--BAS, Acad. Georgi Bonchev Str., Block 8, 1113 Sofia, Bulgaria}
\author{J. Wirth}
\affiliation{Department of Mathematics; University of Stuttgart;  
Germany}
\email{jens.wirth@mathematik.uni-stuttgart.de}
\begin{abstract}
This paper considers Hamiltonians with localised potentials and gives a variational characterisation of resonant coupling parameters, which allow to provide estimates for the first resonant parameter and in turn also to provide bounds for resonant free regions. As application we provide a constructive approach to calculate the first resonant parameter for Yukawa type potentials in $\R^3$.
\end{abstract}
\maketitle
%
\section{Introduction}
In this introductory section we will provide necessary definitions and rigorous statements of the main results. Subsequent sections contain the proofs.

We consider the perturbation of the Laplace operator $\Delta$ in $\R^n,$ $n \geq 3$, by a potential $V$,
\begin{equation}
   H_\varkappa =  - \Delta - \varkappa V(x),
\end{equation}
where $\varkappa\in\R$ is a coupling parameter and the potential $V(x)$ is real, locally bounded on $\mathbb R^n\setminus\{0\}$ and satisfies the asymptotic properties
\begin{equation}\label{eq.H1}
    \lim_{x \to 0} |x|^2 V(x) = 0, \ \ \lim_{x \to \infty} |x|^2 V(x) = 0.
\end{equation}
These assumptions guarantee a form of compactness used later on and imply in particular that the essential spectrum of the operator satisfies $\sigma_{\rm ess}(H_\varkappa) = [0,\infty)$ for all values $\varkappa\in\R$.
We further assume that $V$ is somewhere positive.

Our first goal shall be to find the values for the coupling parameter $\varkappa$ so that $ H_\varkappa$ has zero resonances and/or zero eigenstates for this value of $\varkappa$.
 Before giving a more rigorous definition of zero resonances and zero resonant states, we can observe that a resonant state $u$ shall be a weak solution 
 of the   equation
\begin{equation}
\label{rests1} \Delta u + \varkappa V(x) u=0
\end{equation}
 belonging to the Dirichlet space $\dot H^1(\R^n).$
 
 It is well-known that in general the resonances on the real line may occur only at the origin \citep{GV07} and therefore it is meaningful to look for the properties of resonant states associated with zero resonance. The presence  of zero resonances can influence the decay rate of the solutions to the Schr\"odinger equation with potential or the wave equation with a potential \citep{ESc04, ESc06, GV03, DP05}.

As we mentioned above resonant states belong to the Dirichlet space $\dot H^1(\R^n)$ and   we recall that 
\begin{equation}
\dot H^1(\R^n) \subset L^{2^*}(\R^n)
\end{equation}
holds true with the Sobolev exponent $2^* = 2n/(n-2)$.  Thus   $ u \in L^2_{\rm loc}(\R^n)$ for $n \geq 3$.

\begin{defi} \label{def.r1}
If $u \in L^2_{\rm loc} (\R^n) $ is a solution to \eqref{rests1}
in distributional sense and
\begin{equation}
\int_{|x|>1}  |x|^{-2a} |u(x)|^2\mathrm d x <\infty
\end{equation} 
for some $a \in (1/2, 3/2)$, but $u$ is not in $ L^2(|x| \geq 1)$, then we shall say that the Hamiltonian $ H_\varkappa$ has zero resonances and $u$ is its zero resonant state or ground state. If $u$ is in $L^2(\R^n)$ we refer to $u$ as zero eigenstate and zero is an embedded eigenvalue.
\end{defi}

The Dirichlet space  $\dot H^1(\R^n)$ is the homogeneous Sobolev space of order one, i.e., the closure of $C_0^\infty(\R^n)$ with respect to the Dirichlet norm.
Using the Hardy inequality
\begin{equation}  \frac{(n-2)^2}{4} \int_{\R^n} |x|^{-2} |u(x)|^2 \mathrm d x \leq \|\nabla u\|_{L^2(\R^n)}^2 \end{equation}
  valid  for all $n \geq 3$, one can use  assumption
 \eqref{eq.H1} and conclude  that the operator
 \begin{equation}\label{eq:embComp}
  \dot{H}^1(\R^n) \ni u \mapsto  |V|^{1/2} u \in L^2(\R^n)
 \end{equation}
 can be approximated in the operator norm by the sequence of operators
 \begin{equation} \dot{H}^1(\R^n) \ni u \mapsto  \mathds{1}_{1/k \leq |x| \leq k}|V|^{1/2} u \in L^2(\R^n).
 \end{equation}
 Thus from the fact that for any $k \in \N$ this is a compact operator, we see that the operator in \eqref{eq:embComp}
 is  compact  from $\dot{H}^1(\R^n) $ into $L^2(\R^n)$.

Furthermore, the Hardy inequality implies that the variational problem
 \begin{equation}\label{eq.M1}
 \begin{split}
    J(V) &= \sup_{u \in \dot{H}^1(\R^n) \setminus \{0\}} \, \frac{ \int_{\R^n} V(x) |u(x)|^2 \mathrm d x}{\|\nabla u\|_{L^2}^2}
    \\&= \sup_{\|\nabla u\|_{L^2} = 1}  \int_{\R^n} V(x) |u(x)|^2 \mathrm d x
 \end{split}
  \end{equation}
characterises a finite quantity $J(V) \in \R$  and due to the compactness of \eqref{eq:embComp}  the supremum is attained at some element $u \in \dot{H}^1(\R^n)$. 

The compactness assumption is essential, in the case of the Hardy potential $V_H(x)= |x|^{-2}$ the supremum in \eqref{eq.M1} is given by  \citep{OK90, D99}
  \begin{equation}\label{eq.M2}
  \begin{split}
    J(V_H) &= \sup_{u \in \dot{H}^1(\R^n) \setminus \{0\}} \, \frac{ \int_{\R^n} |x|^{-2} |u(x)|^2 \mathrm d x}{\|\nabla u\|_{L^2}^2} \\&= \frac4{(n-2)^2}
  \end{split}
  \end{equation}
 but it is not attained in $\dot{H}^1(\R^n)$.

\begin{lem}\label{L1}
Assume $n\ge 3,$ the potential $V$ satisfies \eqref{eq.H1} and has a non-vanishing positive part.
 Then the supremum in the variational problem \eqref{eq.M1} is attained for some real-valued $u\in\dot H^1(\R^n)$.
\end{lem}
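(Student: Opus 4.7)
The plan is to use the standard direct method of the calculus of variations, leveraging the compactness of the map $u\mapsto |V|^{1/2}u$ from $\dot H^1(\R^n)$ to $L^2(\R^n)$ already established in the excerpt.

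First I would fix a maximising sequence $u_k\in\dot H^1(\R^n)$ normalised by $\|\nabla u_k\|_{L^2}=1$ such that $\int_{\R^n}V(x)|u_k(x)|^2\,\mathrm dx\to J(V)$. Because the sequence is bounded in the Hilbert space $\dot H^1(\R^n)$, I can pass to a subsequence and obtain a weak limit $u\in\dot H^1(\R^n)$, with $\|\nabla u\|_{L^2}\le 1$ by lower semicontinuity of the norm. The compactness of the embedding in \eqref{eq:embComp} then upgrades weak convergence to strong convergence $|V|^{1/2}u_k\to|V|^{1/2}u$ in $L^2(\R^n)$, which by polarisation (or simply by writing $\int V|u_k|^2=\int V_+|u_k|^2-\int V_-|u_k|^2$ and applying the compactness to each piece with $|V|^{1/2}$) gives $\int V|u_k|^2\to\int V|u|^2=J(V)$.

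The key point to check is that the weak limit $u$ is nontrivial and saturates the normalisation. Nontriviality follows because $V$ has a non-vanishing positive part, so choosing a test function supported near a point where $V>0$ shows $J(V)>0$; then $\int V|u|^2=J(V)>0$ forces $u\not\equiv 0$. To show $\|\nabla u\|_{L^2}=1$, I would argue by contradiction: if $\|\nabla u\|_{L^2}=\lambda<1$, then the rescaled function $v=u/\lambda$ is admissible in \eqref{eq.M1} and yields
\begin{equation}
\int_{\R^n} V(x)|v(x)|^2\,\mathrm dx=\frac{J(V)}{\lambda^2}>J(V),
\end{equation}
contradicting the definition of $J(V)$. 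Hence $u$ attains the supremum.

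Finally, to obtain a real-valued maximiser, I would observe that writing $u=u_1+\mathrm iu_2$ with $u_1,u_2$ real gives $|\nabla u|^2=|\nabla u_1|^2+|\nabla u_2|^2$ and $|u|^2=u_1^2+u_2^2$, so the Rayleigh quotient for $u$ is a convex combination of those for $u_1$ and $u_2$. Consequently at least one of $u_1,u_2$ is itself a maximiser, providing the desired real-valued function. The only nontrivial ingredient is the compactness statement already proved using \eqref{eq.H1} and the Hardy inequality; I do not anticipate further obstacles.
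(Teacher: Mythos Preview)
Your proof is correct and follows essentially the same route as the paper's: take a maximising sequence on the unit sphere of $\dot H^1$, extract a weak limit, use compactness of \eqref{eq:embComp} to pass to the limit in the numerator, and conclude from $J(V)>0$ that the limit is nonzero and hence a maximiser. The only cosmetic differences are that the paper starts with a real-valued maximising sequence from the outset (rather than extracting a real maximiser at the end as you do), and that the paper spells out the Cauchy--Schwarz estimate for $\int V(|u_j|^2-|u_*|^2)$ rather than invoking the $V_\pm$ splitting; both are equivalent ways of handling the sign of $V$.
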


One natural question is to characterise open intervals free  of resonant parameters $\varkappa$. We are particularly interested in the neighbourhood of $\varkappa=0$.
The inequality $J(V)<\infty$ and the fact that any weak solution of $\Delta u + \varkappa V u = 0$ with  $(1+|x|)^{-a} u \in L^2(\R^n)$ is automatically in $\dot{H}^1(\R^n)$,
imply  the following.

\begin{lem}
If $n \geq 3,$ the potential satisfies the assumptions \eqref{eq.H1} and
$ \varkappa_*= (J(V))^{-1}$ with $J(V)$ defined in \eqref{eq.M1}, then for any
$\varkappa \in (0,\varkappa_*)$ the equation $H_\varkappa u=0$ has no solution in $L^2_{\rm loc}(\R^n)$ such that $(1+|x|)^{-a} u \in L^2(\R^n)$ with some $a \in (1/2,3/2)$.
\end{lem}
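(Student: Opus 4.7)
The plan is to argue by contradiction. Suppose for some $\varkappa\in(0,\varkappa_*)$ there exists a nontrivial $u\in L^2_{\rm loc}(\R^n)$ solving $\Delta u+\varkappa Vu=0$ in the distributional sense with $(1+|x|)^{-a}u\in L^2(\R^n)$ for some $a\in(1/2,3/2)$. By the regularity fact recorded immediately before the lemma, any such $u$ automatically belongs to $\dot H^1(\R^n)$; moreover, the compactness of the map in \eqref{eq:embComp} ensures $|V|^{1/2}u\in L^2(\R^n)$, so the integral $\int_{\R^n}V(x)|u(x)|^2\,\mathrm dx$ is absolutely convergent.

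Next I would test the equation against $u$ itself. Approximating $u$ in the Dirichlet norm by $\varphi_k\in C_0^\infty(\R^n)$ and exploiting the continuity of the map $\dot H^1(\R^n)\ni v\mapsto|V|^{1/2}v\in L^2(\R^n)$ to pass to the limit in the $V$-weighted cross terms converts the distributional identity into the energy identity
\begin{equation*}
\|\nabla u\|_{L^2(\R^n)}^2=\varkappa\int_{\R^n}V(x)|u(x)|^2\,\mathrm dx.
\end{equation*}
Inserting the variational bound $\int_{\R^n}V(x)|u(x)|^2\,\mathrm dx\le J(V)\|\nabla u\|_{L^2}^2$ coming from \eqref{eq.M1} yields $(1-\varkappa J(V))\|\nabla u\|_{L^2}^2\le 0$. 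Since $\varkappa J(V)<\varkappa_*J(V)=1$ the prefactor is strictly positive, forcing $\nabla u\equiv 0$; in $\dot H^1(\R^n)$ this entails $u\equiv 0$, contradicting the assumed nontriviality.

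The step I expect to need the most attention is the passage from the distributional equation to the energy identity, i.e., the legitimacy of the integration by parts with only $u\in\dot H^1(\R^n)$ available. With the Hardy inequality and the vanishing limits $\lim_{x\to 0}|x|^2V(x)=0=\lim_{x\to\infty}|x|^2V(x)$ from \eqref{eq.H1} at one's disposal, a truncation-and-mollification argument is standard but must be written out carefully so that both the Dirichlet integral and the $V$-weighted term are controlled uniformly. Once this identity is in place, the remainder of the proof is just the variational definition of $J(V)$ combined with the arithmetic $\varkappa J(V)<1$.
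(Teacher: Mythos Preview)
Your proposal is correct and follows exactly the argument the paper indicates in the sentence preceding the lemma: once one knows that any admissible solution lies in $\dot H^1(\R^n)$, testing the equation against $u$ and invoking the definition of $J(V)$ immediately forces $u\equiv0$ whenever $\varkappa J(V)<1$. The paper gives no further proof beyond that one-line remark, so your write-up is simply a careful unpacking of the same idea, including the appropriate caution about justifying the energy identity via approximation in $\dot H^1$.
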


Our next result  gives a sufficient condition on $V$ such that the choice $\varkappa_*=1/J(V)$ implies $H_{\varkappa_*}$ has a resonance in $0$, i.e., the corresponding maximiser is a resonant state and in particular not in $L^2(\R^3)$. For this we assume that the potential $V$ is non-negative, i.e.,
\begin{equation}\label{eq.H2}
   V(x) \ge0 \qquad a.e.
\end{equation}
Then clearly from  the well-known property  \citep{Giu03}
\begin{equation}\|\  \nabla |u|\  \|_{L^2}  \leq \| \nabla u \|_{L^2} \end{equation}
we conclude that with each maximiser $u\in\dot H^1(\R^n)$ also its modulus $|u|$ is a maximiser and thus without loss of generality we may assume $u(x)\ge0$ a.e. in the following. Then $u$ is a solution to the corresponding Euler--Lagrange equation
\begin{equation}\label{eq.M3}
\begin{split}
 &  \Delta u + \varkappa_* V(x) u(x) = 0 , \ \ x \in \R^n,  \ x \neq 0, \\
  & u(x) > 0 , \ \ x \in \R^n.
\end{split}
\end{equation}

We formulate our first main theorem. The assumptions on the potential are slightly weaker than before. It applies in particular to maximizers of \eqref{eq.M1}.
\begin{teom} \label{MT1}
Suppose that $n \geq 3$, the potential belongs to the  Lorentz space  $V \in L^{p,n}(\R^n)$ with  $p = n/3$, is  non-negative and not identically vanishing.
Then any positive solution $u\in\dot H^1(\R^n)$
 to \eqref{eq.M3} is not in $L^2(\R^n)$ for $n\in\{3,4\}$ and
belongs to $\mathrm L^2(\R^n)$ for $n\ge5$.
\end{teom}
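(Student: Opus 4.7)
The plan is to combine an explicit Newtonian representation of $u$ with a pointwise lower bound at infinity on the one hand and a Hardy--Littlewood--Sobolev estimate in Lorentz scales on the other. First I would establish that, since $u\in\dot H^1(\R^n)$ solves $-\Delta u=\varkappa_*Vu$ distributionally and the homogeneous Sobolev condition excludes a nontrivial harmonic background,
\begin{equation*}
u(x)=c_n\varkappa_*\int_{\R^n}|x-y|^{2-n}V(y)u(y)\,\mathrm dy=c_n\varkappa_*I_2(Vu)(x).
\end{equation*}
Local integrability of $Vu$, needed to make sense of this integral, follows from local Lorentz--H\"older inequalities together with a Brezis--Kato style bootstrap that upgrades $u\in L^{2^*,2}_{\mathrm{loc}}$ to local boundedness; the hypothesis $V\in L^{n/3,n}$ is subcritical with respect to the Schr\"odinger critical exponent $n/2$.

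For the dimensions $n\in\{3,4\}$ I would derive a lower bound $u(x)\ge c|x|^{2-n}$ at infinity. Since $V\ge0$ is not identically zero and $u>0$, there exists $R>0$ with $I_0:=\int_{|y|\le R}V(y)u(y)\,\mathrm dy\in(0,\infty)$. For $|x|\ge 2R$ and $|y|\le R$ one has $|x-y|\le 2|x|$, so the representation gives $u(x)\ge c_n\varkappa_*(2|x|)^{2-n}I_0=c|x|^{2-n}$. Since $\int_{|x|>2R}|x|^{2(2-n)}\,\mathrm dx$ diverges whenever $n\le 4$, this forces $u\notin L^2(\R^n)$.

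For $n\ge 5$ I would apply the Lorentz version of the Hardy--Littlewood--Sobolev inequality to the representation above. The sharp Sobolev embedding $\dot H^1(\R^n)\hookrightarrow L^{2n/(n-2),2}(\R^n)$ combined with H\"older's inequality in Lorentz spaces yields $Vu\in L^{2n/(n+4),\,2n/(n+2)}(\R^n)$, and since $1<2n/(n+4)<n/2$ exactly when $n\ge 5$, the Riesz-potential mapping property gives
\begin{equation*}
u=c_n\varkappa_*I_2(Vu)\in L^{2,\,2n/(n+2)}(\R^n)\subset L^2(\R^n),
\end{equation*}
using the nesting $L^{p,q_1}\subset L^{p,q_2}$ for $q_1\le q_2$ together with $2n/(n+2)\le 2$.

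The main obstacle will be the first step: rigorously justifying the Newtonian representation and securing local integrability and strict positivity of $Vu$ uniformly in the dimension, in particular in the borderline case $n=3$ where $L^{1,3}$ sits only marginally above weak $L^1$. The remaining parts are straightforward scaling calculations once those facts are in hand.
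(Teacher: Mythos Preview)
Your proposal is correct and follows essentially the same route as the paper: the Newtonian representation $u=c_n\varkappa_*\,|x|^{2-n}*(Vu)$, a pointwise lower bound $u(x)\gtrsim|x|^{2-n}$ from positivity of $\int_{|y|\le R}Vu\,\mathrm dy$ for $n\in\{3,4\}$, and H\"older in Lorentz spaces followed by the Young--O'Neil/HLS convolution inequality for $n\ge5$. The only differences are cosmetic---the paper uses the plain Sobolev embedding $u\in L^{2^*}=L^{2^*,2^*}$ rather than the sharp $L^{2^*,2}$, obtains $Vu\in L^1_{\rm loc}$ by a direct H\"older step without any Brezis--Kato bootstrap, and does not linger over the justification of $w=u$ that you flag as the main obstacle.
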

\begin{remark}
If $n\in\{3,4\}$ the assumption $V \in L^{p,n}(\R^n)$ is too strong for the result stated and can be  replaced by $V \in L^{p,n}_{\rm loc}(\R^n)$.
\end{remark}

The above result can be extended assuming
\begin{equation} V = V_+ - V_-, \ \ V_+(x) = \max (V(x),0), \end{equation}
and assuming $V_+$ satisfies the assumption of Theorem \ref{MT1} and $V_-$ decaying faster than $|x|^{-2}$ at infinity.

\begin{teom} \label{MT2} Suppose that $n \geq 3,$
 \begin{equation} V = V_+ - V_-, \ \ V_+(x) = \max (V(x),0), \end{equation}
 so that  $V_+(x) \in L^{p,n}(\R^n)$ with  $p = n/3$ and  $V_+(x) >0$ on a set of positive measure and  $V_-(x)$ satisfies the decay property
 \begin{equation} V_-(x) \lesssim (1+|x|)^{-b}, \ \ b>2.\end{equation}
 Then any maximiser of \eqref{eq.M1} which is solution to \eqref{eq.M3} is not in $L^2(\R^n)$ for $n\in\{3,4\}$
 and belongs to $L^2(\R^n)$ for $n\ge 5$.
\end{teom}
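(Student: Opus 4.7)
The idea is to extend the proof of Theorem~\ref{MT1} by absorbing the negative part $V_-$ into the left-hand side of the equation and treating it as a short-range perturbation of the Laplacian. First, I note that for any maximiser the substitution $u\mapsto|u|$ does not change $\int V|u|^2$ and does not increase $\|\nabla u\|_{L^2}$, so we may take $u\ge 0$; the strong maximum principle applied to the equation below then gives $u>0$ in $\R^n\setminus\{0\}$, placing us in the setting of \eqref{eq.M3}.

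The key reformulation is to view the Euler--Lagrange equation as
\begin{equation*}
(-\Delta+\varkappa_*V_-)\,u\;=\;\varkappa_*V_+\,u,
\end{equation*}
so that the operator $L:=-\Delta+\varkappa_*V_-$ on the left is non-negative. Since $V_-\ge 0$, any $w\in\dot H^1(\R^n)$ in the kernel of $L$ would satisfy $\|\nabla w\|_{L^2}^2+\varkappa_*\int V_-|w|^2\mathrm{d}x=0$ and hence vanish, so $L$ has neither zero eigenvalue nor zero resonance. Combined with the short-range hypothesis $V_-(x)\lesssim(1+|x|)^{-b}$, $b>2$, standard low-energy resolvent/scattering theory then yields the two-sided Green's function bound
\begin{equation*}
\tilde G(x,y)\;\asymp\;|x-y|^{2-n}\qquad\text{for }|x-y|\gg 1,
\end{equation*}
where $\tilde G$ is the integral kernel of $L^{-1}$.

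Plugging this into the representation $u(x)=\varkappa_*\int\tilde G(x,y)\,V_+(y)\,u(y)\,\mathrm{d}y$ gives $u(x)\asymp|x|^{2-n}$ as $|x|\to\infty$. Positivity of the leading constants reduces to the facts that $V_+u$ is positive on a set of positive measure (using $V_+>0$ on such a set and $u>0$) and that $V_+u\in L^1(\R^n)$ (from $V_+\in L^{n/3,n}$, $u\in L^{2n/(n-2)}$, and a Lorentz--H\"older inequality, exactly as in Theorem~\ref{MT1}). The conclusion then follows from the elementary fact that $|x|^{2(2-n)}$ is integrable at infinity precisely when $n\ge 5$.

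The main technical obstacle is the lower bound $\tilde G(x,y)\gtrsim|x-y|^{2-n}$. The upper bound $\tilde G\le c_n|x-y|^{2-n}$ is immediate from $L\ge-\Delta$ and already yields the $L^2$ conclusion in the case $n\ge 5$, so only the non-$L^2$ statement in dimensions $n\in\{3,4\}$ relies on the sharper two-sided asymptotic. The matching lower bound rests on the resolvent identity $\tilde G=G_0-G_0(\varkappa_*V_-)\tilde G$ together with short-range scattering estimates; the absence of zero-energy obstructions noted above, and the assumption $b>2$, ensure that the perturbation series converges and that the free asymptotic $c_n|x-y|^{2-n}$ persists in the limit.
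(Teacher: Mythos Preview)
Your overall strategy coincides with the paper's: rewrite the Euler--Lagrange equation as $(-\Delta+\varkappa_*V_-)u=\varkappa_*V_+u$, establish the two-sided bound $\tilde G(x,y)\asymp|x-y|^{2-n}$ for the Green's function of $L=-\Delta+\varkappa_*V_-$, and deduce $u(x)\gtrsim|x|^{2-n}$ for large $|x|$, which fails to be square integrable in dimensions $3$ and $4$. For $n\ge5$ your domination argument $\tilde G\le G_0$ (equivalently, applying Part~1 of the proof of Theorem~\ref{MT1} to $V_+u$ rather than $Vu$) is correct and in fact slightly cleaner than the paper's one-line reference back to Theorem~\ref{MT1}.

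The only substantive gap is your justification of the lower bound $\tilde G(x,y)\gtrsim|x-y|^{2-n}$. You invoke the resolvent identity and assert that ``the perturbation series converges'' because $L$ has no zero modes and $b>2$; but nothing in the hypotheses makes $\varkappa_*V_-$ small in any operator norm, and absence of zero-energy obstructions does not by itself force convergence of the Neumann series. A single application of the identity gives $\tilde G=G_0-\varkappa_*G_0V_-\tilde G$, where the correction term is of the \emph{same} order $|x-y|^{2-n}$ with an uncontrolled constant, so no positive lower bound follows. The paper obtains this bound by a different and more direct mechanism: it quotes a heat-kernel result of Zhang giving the Gaussian lower bound
\[
\e^{-tL}(x,y)\gtrsim t^{-n/2}\e^{-c|x-y|^2/t}
\]
for $L=-\Delta+\varkappa_*V_-$ with $V_-\ge0$ satisfying the stated decay, and then integrates $\int_0^\infty\e^{-\lambda t}\e^{-tL}(x,y)\,\mathrm dt$ and lets $\lambda\searrow0$ to obtain $\tilde G(x,y)\gtrsim|x-y|^{2-n}$ directly, with no smallness assumption needed. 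Replacing your scattering-theory sketch by this heat-kernel step would complete your argument along the same lines as the paper.
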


Our next step is to give a very precise estimate for the first resonant parameter $\varkappa_*(V_0) $ for the case, when $V_0$ is the Yukawa potential
\begin{equation}
   V_0(r) = \frac{\e^{-r}}r,\qquad r=|r|,
\end{equation}
and space dimension equals $3$. The estimate corresponds to bounds given in a recent paper \citep{EGSTW17}.
\begin{teom}\label{thm3}
Suppose $n=3$. Then we have the estimates
\begin{equation} 1.67626 < \varkappa_*\left( \frac{\e^{-r}}r\right) <1.68742.\end{equation}
\end{teom}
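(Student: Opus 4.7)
The plan is to reduce Theorem \ref{thm3} to a one-dimensional Sturm--Liouville problem by radial symmetry, then establish the two bounds by independent methods: a variational trial function for the upper bound and a Barta-type supersolution argument for the lower bound.

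\textbf{Radial reduction.} Since $V_0(|x|)=\e^{-|x|}/|x|$ is positive and radially non-increasing, symmetric decreasing rearrangement allows us to choose a maximiser of \eqref{eq.M1} that is radial and non-negative. Writing $u(x)=v(|x|)/|x|$ with $v(0)=0$ and integrating by parts, the Rayleigh quotient takes the form
\begin{equation*}
 \varkappa_*\!\left(\frac{\e^{-r}}{r}\right) = \inf_{v(0)=0,\ v'\in L^2(0,\infty)} \frac{\displaystyle\int_0^\infty v'(r)^2\,\mathrm d r}{\displaystyle\int_0^\infty \frac{\e^{-r}}{r}\,v(r)^2\,\mathrm d r},
\end{equation*}
with Euler--Lagrange equation $v''+\varkappa(\e^{-r}/r)v=0$, $v(0)=0$. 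Thus $\varkappa_*$ is the smallest $\varkappa>0$ for which this ODE admits a positive solution that is bounded (but not $L^2$) at infinity.

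\textbf{Upper bound.} Insert into the Rayleigh quotient a trial function mimicking the expected shape of the resonant state ($u\sim1/r$ at infinity, $u$ bounded at the origin), for instance $v(r)=1-\e^{-\alpha r}$, possibly augmented by a second term $c\,r\,\e^{-\beta r}$ to improve the fit. Both integrals collapse to elementary expressions via the Frullani identity $\int_0^\infty(\e^{-ar}-\e^{-br})/r\,\mathrm d r=\log(b/a)$; for the one-parameter ansatz one gets $R(\alpha)=(\alpha/2)/\left(2\log(1+\alpha)-\log(1+2\alpha)\right)$, and a numerical minimisation of the analogous two-parameter expression brings the value below $1.68742$.

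\textbf{Lower bound.} For any $w\in C^2(0,\infty)$ with $w>0$, the substitution $v=wy$ yields, modulo boundary terms that vanish on admissible $v$,
\begin{equation*}
 \int_0^\infty v'^2\,\mathrm d r - \lambda\int_0^\infty \frac{\e^{-r}}{r}v^2\,\mathrm d r = \int_0^\infty w^2\bigl((v/w)'\bigr)^2\,\mathrm d r - \int_0^\infty\!\left(\frac{w''}{w}+\lambda\frac{\e^{-r}}{r}\right)v^2\,\mathrm d r.
\end{equation*}
Hence, if $w$ is a \emph{supersolution} in the sense that $w''(r)+\lambda\,(\e^{-r}/r)\,w(r)\le0$ for all $r>0$, then the weighted Hardy inequality $\int v'^2\ge\lambda\int(\e^{-r}/r)v^2$ holds for every admissible $v$, which by the variational characterisation gives $\varkappa_*\ge\lambda$. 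To reach $\lambda=1.67626$, construct $w$ from a parametric ansatz such as $w(r)=r\,\e^{-\beta r}(a+br)$ (or from a partial sum of a power-series expansion of the Euler--Lagrange solution at a slightly subcritical coupling), and adjust the parameters so that the pointwise supersolution inequality is satisfied.

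\textbf{Main obstacle.} The principal difficulty lies in the lower bound: finding an explicit $w$ rich enough to yield such a sharp value of $\lambda$ yet simple enough for the differential inequality $w''+\lambda(\e^{-r}/r)w\le0$ to be verified rigorously on the entire half-line. Since $\e^{-r}/r$ is singular at $0$ and both terms decay at infinity, the verification typically requires splitting $(0,\infty)$ into a finite number of subintervals and estimating the sign of the expression on each, potentially with the aid of interval-arithmetic bounds to handle the transition region.
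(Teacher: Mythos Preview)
Your radial reduction and upper bound are sound; the one-parameter trial $v(r)=1-\e^{-\alpha r}$ already gives $R(\alpha)\approx 1.6805$ near $\alpha\approx 1.5$, comfortably below $1.68742$. The genuine gap is the lower bound: you outline a Barta-type supersolution strategy but do not execute it, and you explicitly flag the construction of $w$ as the ``main obstacle''. The concrete ansatz $w(r)=r\e^{-\beta r}(a+br)$ has only two effective parameters after scaling and is far too rigid to enforce $w''+\lambda(\e^{-r}/r)w\le 0$ on all of $(0,\infty)$ with $\lambda$ within $0.7\%$ of $\varkappa_*$; the alternative of using a truncated power-series solution at slightly subcritical coupling has no a~priori sign on its residual, so it is not automatically a supersolution and would need a further argument. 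As it stands the lower bound is a programme, not a proof.

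The paper avoids this obstacle by a shooting/Wronskian method that yields both bounds from a single construction. Two canonical solutions are built: $\mathbf u_{\rm int}(r;\varkappa)=\sum_k(-1)^k\alpha_k(\varkappa)r^k$ as a convergent power series at $r=0$ with an explicit recursion for $\alpha_k$, and $\mathbf u_{\rm ext}(r;\varkappa)=r^{-1}\sum_k(-\varkappa)^k\omega_k(r)$ by iterated integration from $r=\infty$ with $\omega_k$ expressible through exponential integrals. The first resonant value is the first zero of their Wronskian $\mathcal W(\varkappa)$ evaluated at $r=1$. The technical core is showing that the alternating coefficients $\alpha_k(\varkappa)$, $k\alpha_k(\varkappa)$, $\varkappa^k\omega_k(1)$, $\varkappa^k\omega_k'(1)$ are monotone in $k$ for the relevant range of $\varkappa$, so Leibniz's criterion turns the partial sums into rigorous two-sided polynomial bounds on $\mathcal W(\varkappa)$; the first positive zeros of the bounding polynomials then bracket $\varkappa_*$. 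This is systematic and refinable to arbitrary precision by taking more terms, whereas your supersolution route would require an increasingly elaborate ad~hoc $w$ for each extra digit.
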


Finally, using a simple comparison argument based on the inequality
\begin{equation} 
\frac{\left( \int_{\R^n} V_1(x) |u(x)|^2 \mathrm d x\right)}{\|\nabla u\|_{L^2}^2} \leq \frac{\left( \int_{\R^n} V_2(x) |u(x)|^2 \mathrm d x\right)}{\|\nabla u\|_{L^2}^2}, \end{equation}
for all $V_1(x)\le V_2(x)$  we can assert that the implication
\begin{equation}\label{res2}
    V_1(x) \leq V_2(x) \ \ \Longrightarrow \ \ \varkappa_*(V_1) \geq \varkappa_*(V_2)
\end{equation}
holds true. In this way we arrive at the following.

\begin{teom}
Supppose $n=3$. If the potential satisfies the assumptions \eqref{eq.H1}and
\begin{equation} V(x) \leq C_0 V_0(x),\end{equation} where $V_0$ is the Yukawa potential, then the first resonant parameter $\varkappa_*(V)$ satisfies the estimate
\begin{equation} \varkappa_*(V) \geq \frac{\varkappa_*(V_0)}{C_0} > \frac{1.67626}{C_0}.\end{equation}
\end{teom}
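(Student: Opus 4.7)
The plan is to derive this as a one-line consequence of the monotonicity inequality \eqref{res2} stated immediately above the theorem, combined with the explicit numerical lower bound of Theorem~\ref{thm3}.

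First I would verify that the variational quantities $J(V)$ and $J(V_0)$ are both finite. The Yukawa potential satisfies \eqref{eq.H1} since $r^2 V_0(r)=r\,\e^{-r}$ vanishes at both $r=0$ and $r=\infty$; combined with the assumption \eqref{eq.H1} on $V$ and the Hardy inequality recalled in the introduction, this ensures that both suprema in \eqref{eq.M1} are finite, so that $\varkappa_*(V)=1/J(V)$ and $\varkappa_*(V_0)=1/J(V_0)$ are well defined positive quantities (with the convention $\varkappa_*=+\infty$ if $J=0$, in which case the inequality to be proved is trivial).

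Then, for any $u\in\dot H^1(\R^n)\setminus\{0\}$, the pointwise bound $V(x)\le C_0 V_0(x)$ integrated against the non-negative weight $|u(x)|^2$ yields
\begin{equation}
\frac{\int_{\R^n} V(x)|u(x)|^2\,\mathrm d x}{\|\nabla u\|_{L^2}^2}\;\le\; C_0\,\frac{\int_{\R^n} V_0(x)|u(x)|^2\,\mathrm d x}{\|\nabla u\|_{L^2}^2}.
\end{equation}
Taking the supremum over admissible $u$ gives $J(V)\le C_0\,J(V_0)$, and inverting produces $\varkappa_*(V)\ge \varkappa_*(V_0)/C_0$. Substituting the lower bound $\varkappa_*(V_0)>1.67626$ from Theorem~\ref{thm3} then yields the stated estimate.

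The only potential subtlety is to make sure the inversion step is legitimate, i.e.\ that $J(V)>0$; but if $V$ has trivial positive part then $J(V)\le 0$ and $\varkappa_*(V)=+\infty$, so the conclusion holds vacuously. There is really no technical obstacle here: the whole substance of the result is absorbed into the sharp numerical bound supplied by Theorem~\ref{thm3}, and the present theorem is just a monotone rescaling of that bound.
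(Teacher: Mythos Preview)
Your proposal is correct and follows exactly the approach the paper itself uses: the theorem is stated immediately after the comparison principle \eqref{res2} with the words ``In this way we arrive at the following,'' so the paper treats it as a direct corollary of \eqref{res2} combined with the numerical bound of Theorem~\ref{thm3}, precisely as you do. Your only addition is to spell out the trivial scaling $J(C_0V_0)=C_0\,J(V_0)$ and to handle the degenerate case $J(V)\le 0$, which the paper leaves implicit.
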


Another scenario of interaction of two potential is associated with the Hamiltonian
 \begin{equation}\label{eq.M4}
   H_{\varkappa_1, \varkappa_2} =   -\Delta - \varkappa_1 V_1(x) - \varkappa_2 V_2(x),
\end{equation}
 where $V_1$ is a Hardy type potential,  $0 \le V_1(x) \le |x|^{-2}$, the coupling constant $\varkappa_1 \in (0, (n-2)^2/4)$ is chosen so that
 $-\Delta - \varkappa_1 V_1$ is positive and one can introduce zero resonant  state and zero resonance as in  Definition \ref{def.r1}.
\begin{defi} \label{def.r2}
If $u \in L^2_{\rm loc} (\R^n) $ is a solution to \begin{equation}  \Delta u + \varkappa_1 V_1(x) u + \varkappa_2 V_2(x) u  =0\end{equation}
in distribution sense and $(1+|x|)^{-a} u \in L^2(\R^n)$ for some $a \in (1/2, 3/2)$, but $u$ is not in $ L^2(\R^n),$ then we shall say that the Hamiltonian $ H_{\varkappa_1,\varkappa_2}$ has
 zero resonances and $u$ is its zero resonant state or ground state.
\end{defi}

To obtain the first resonant parameter $\varkappa_2,$ we consider
the variational problem
 \begin{equation}\label{eq.M5}
 \begin{split}
    &J_{\varkappa_1}(V_1, V_2) \\
    &= \sup_{u \in \dot{H}^1(\R^n) \setminus \{0\}} \, \frac{\left( \int_{\R^n} V_2(x) |u(x)|^2 \mathrm d x\right)}{\|\nabla u\|_{L^2}^2- \varkappa_1 \int_{\R^n} V_1(x) |u(x)|^2 \mathrm d x},
    \end{split}
  \end{equation}
 where $V_2$ is a potential satisfying the compactness assumptions \eqref{eq.H1}.
 In this way one obtains again finiteness of $J_{\varkappa_1}(V_1,V_2)$ and verify that
  the supremum is attained at some $u \in \dot{H}^1(\R^n).$

Our next result gives a sufficient condition on $V_2$ such that the choice of the  parameter $\varkappa_2=1/J_{\varkappa_1}(V_1,V_2)$ with $J_{\varkappa_1}(V_1,V_2)$  defined in \eqref{eq.M5} guarantees that we have a resonance in $0$ and the corresponding ground state is not in $L^2(\R^3).$ We again assume that $V_2(x)\ge0$ almost everywhere. Then without loss of generality the maximiser $u\in\dot H^1(\R^n)$ of \eqref{eq.M5} is non-negative  and thus a solution to the corresponding Euler-Lagrange equation
\begin{equation}\label{eq.M6}
\begin{split}
 &  \Delta u + \varkappa_1 V_1(x) u(x) + \varkappa_2 V_2(x) u(x) = 0 , \ \ x \in \R^n, \\
  & \nonumber u(x) > 0 , \ \ x \in \R^n,  \ x \neq 0.
\end{split}
\end{equation}

\begin{teom} \label{MT3} Suppose that $n \geq 3,$ $V_1(x)=|x|^{-2}$,  $V_2(x)$ satisfies the assumptions \eqref{eq.H1},
\begin{equation} V_2 = V_{2,+} - V_{2,-}, \ \ V_{2,+}(x) = \max (V_2(x),0), \end{equation}
 so that
 $V_{2,+} \in L^{p,\infty}(\R^n)$, $p=n/3$ and $V_-(x)$ satisfies the decay property
 \begin{equation} V_-(x) \lesssim (1+|x|)^{-b}, \ \ b>2.\end{equation}
  Then the maximizer of \eqref{eq.M5} which is solution to \eqref{eq.M6} is not in $L^2(\R^n)$ for $n\in\{3,4\}$ and belongs to $L^2(\R^n)$ for $n\ge 5$.
\end{teom}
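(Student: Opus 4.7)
The proof follows the pattern of Theorem~\ref{MT2}, with the inverse-square potential absorbed into the leading operator at infinity. First, existence of a maximiser $u\in\dot H^1(\R^n)$ of \eqref{eq.M5} is established exactly as in Lemma~\ref{L1}: positivity of the denominator is ensured by the Hardy inequality together with $\varkappa_1<(n-2)^2/4$, while compactness of $u\mapsto|V_{2,+}|^{1/2}u$ into $L^2$ follows from $V_{2,+}\in L^{p,\infty}(\R^n)$ and from the decay hypothesis on $V_{2,-}$, via the embedding argument used for \eqref{eq:embComp}. Replacing $u$ by $|u|$ shows we may assume $u\ge 0$, so that $u$ solves \eqref{eq.M6}.

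The key step is the asymptotic analysis of $u$ at infinity. The linear operator $L=-\Delta-\varkappa_1|x|^{-2}$ has two radial homogeneous exponents
\begin{equation*}
s_\pm=\frac{(n-2)\pm\sqrt{(n-2)^2-4\varkappa_1}}{2},\qquad s_-+s_+=n-2,
\end{equation*}
with $0<s_-<(n-2)/2<s_+<n-2$. I would perform the change of unknown $u(x)=|x|^{-s_-}v(x)$, which transforms the linear part of \eqref{eq.M6} into
\begin{equation*}
-\partial_r^2 v-(N-1)r^{-1}\partial_r v-r^{-2}\Delta_{S^{n-1}}v=\varkappa_2 V_2 v,
\end{equation*}
where $N=n-2s_-\in(2,n)$. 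The radial principal part coincides with the Laplacian in the effective dimension $N$, and the right-hand side inherits the decay of $V_2$, placing the transformed problem in the framework of Theorem~\ref{MT2} in dimension $N$. Consequently $v$ decays at infinity like the fundamental solution of the effective Laplacian, $v(x)\sim c|x|^{-(N-2)}$, hence
\begin{equation*}
u(x)\sim c|x|^{-s_+}\qquad\text{as }|x|\to\infty.
\end{equation*}

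From this asymptotic the conclusion is immediate: $u$ is square-integrable at infinity iff $2s_+>n$, equivalently $\sqrt{(n-2)^2-4\varkappa_1}>2$, i.e.\ $\varkappa_1<n(n-4)/4$. For $n\in\{3,4\}$ this is impossible since $\varkappa_1>0$, so $u\notin L^2(\R^n)$; for $n\ge 5$, in the admissible range of $\varkappa_1$, the condition holds and $u\in L^2(\R^n)$.

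The main obstacle is making the fractional-dimensional reduction rigorous. The substitution $u=|x|^{-s_-}v$ yields a radial principal part in ``dimension'' $N$, but the angular Laplacian remains on $S^{n-1}$ rather than $S^{N-1}$, so one must either formulate the conclusion of Theorem~\ref{MT2} directly for the transformed operator, or work with the genuine Green's function of $L$ and construct sharp sub- and super-solutions of the form $c_i|x|^{-s_+}$, exploiting positivity of $u$ and the assumptions $V_{2,+}\in L^{p,\infty}$ and $V_{2,-}\lesssim(1+|x|)^{-b}$ with $b>2$ to absorb the perturbation $\varkappa_2 V_2 u$ as a lower-order term.
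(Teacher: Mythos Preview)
Your approach differs substantially from the paper's, and is considerably more circuitous. The paper makes no substitution and no effective-dimension reduction; it simply repeats the argument of Theorem~\ref{MT2} verbatim, placing the inverse-square potential on the \emph{right-hand} side rather than absorbing it into the principal operator. One writes
\[
(-\Delta+\varkappa_2V_{2,-})u=\varkappa_1|x|^{-2}u+\varkappa_2V_{2,+}u\;\ge\;\varkappa_1|x|^{-2}u\;\ge\;0,
\]
and then invokes Zhang's lower Gaussian heat-kernel bound for $-\Delta+\varkappa_2V_{2,-}$ exactly as in Theorem~\ref{MT2} to obtain $(-\Delta+\varkappa_2V_{2,-})^{-1}(x,y)\gtrsim|x-y|^{2-n}$, whence $u(x)\gtrsim|x|^{2-n}$ for large $|x|$ and $u\notin L^2$ when $n\in\{3,4\}$. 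Since the Hardy term never enters the operator whose Green's function is analysed, the fractional-dimension obstacle you identify simply does not arise; the hypothesis on $V_{2,-}$ is precisely what is needed to apply Zhang's theorem, and nothing further is required.

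Your route via $u=|x|^{-s_-}v$ aims at the sharper asymptotic $u\sim c|x|^{-s_+}$, but besides the unresolved angular-Laplacian issue you flag, your conclusion for $n\ge5$ contains an error: the condition $2s_+>n$, i.e.\ $\varkappa_1<n(n-4)/4$, is \emph{strictly stronger} than the admissible bound $\varkappa_1<(n-2)^2/4$ (the difference is exactly $1$), so it certainly does not hold throughout ``the admissible range of $\varkappa_1$'' as you assert. Taken at face value your own computation therefore does not establish $u\in L^2$ for all permitted $\varkappa_1$ when $n\ge5$; on the contrary it points the other way for $\varkappa_1$ near $(n-2)^2/4$.
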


\section{Proofs of the variational formulae}

In this section we investigate the variational problem \eqref{eq.M1} and prove some of our main theorems. For convenience we start with the proof of Lemma~\ref{L1}. As both numerator and denominator of the variational quotient are homogeneous, we can replace the supremum over all functions from $\dot H^1(\R^n)\setminus\{0\}$ by a supremum over the unit sphere or the unit ball in $\dot H^1(\R^n)$.

\begin{proof}[Proof of Lemma~\ref{L1}]  The quantity \eqref{eq.M1} is a well-defined positive number if the potential has a non-vanishing positive part. If $u_j\in\dot H^1(\R^n)$ is a real-valued maximizing sequence for \eqref{eq.M1} with $\|\nabla u_j\|_2=1$, then we have to show that this sequence can not converge weakly in $\dot{H}^1$ to zero. By weak compactness of the unit ball in $\dot H^1(\R^n)$ there exists a convergent subsequence (also denoted by $u_j$) and a $u_*$ with
$u_j \rightharpoonup u_*$ in $\dot H^1(\R^n)$. By compactness of the embedding \eqref{eq:embComp}
\begin{equation}
    \int_{\R^n} |V(x)| \,| u_j(x) - u_*(x)|^2 \mathrm d x \to 0
\end{equation}
and hence by Cauchy-Schwarz
\begin{equation}
  \begin{split}
&  \int_{\R^n} |V(x)| \, \big(u_j(x)-u_*(x)\big) u_*(x) \mathrm d x \to 0,\\
 &   \int_{\R^n} |V(x)|\, u_j(x) \big(u_j(x)-u_*(x)\big)\mathrm d x\to 0
 \end{split}
\end{equation}
such that
\begin{equation*}
\begin{split}
& \left| \int_{\R^n} V(x) |u_j(x)|^2 \mathrm d x - \int_{\R^n} V(x) |u_*(x)|^2 \mathrm d x \right|
\\&  \le \int_{\R^n} |V(x)| \, \left|  |u_j(x)|^2 - |u_*(x)|^2\right| \mathrm d x \\
& =
\int_{\R^n} |V(x)| \, |u_j(x)-u_*(x)|^2 \mathrm d x \\
&+ 2 \int_{\R^n} |V(x)|\,  \big(u_j(x)u_*(x)  - \min( |u_*(x)|^2, |u_j(x)|^2 \big)\mathrm d x
\end{split}
\end{equation*}
tends to zero. In particular this implies
\begin{equation}
  0\ne J(V) = \int_{\R^n} V(x) |u_*(x)|^2 \mathrm d x 
\end{equation}
 so  $0 < \|\nabla u_*\|_2 \leq \lim_{j \to \infty}\|\nabla u_j\|_2 = 1. $
This observation shows that $u_*$ is one maximizer and the lemma is proven.
\end{proof}

\begin{proof}[Proof of Theorem \ref{MT1}]
We distinguish two parts, first we give an upper bound on the solution based on convolution inequalities. This is valid in space dimensions $n\ge 5$ and gives square integrability of solutions. In a second part we give a lower bound for space dimensions $n\in\{3,4\}$ contradicting square integrability.

{\it Part 1.}  We use H\"older inequality combined with the Young-O'Neil inequality of convolutions \citep{ON63} to estimate $u$ from the a-priori knowledge $u\in \dot H^1(\R^n)$. First, by Sobolev inequality
$u\in L^{2^*}(\R^n)= L^{2^*,2^*}(\R^n)$ for the Sobolev exponent $2^*=2n / (n-2)$.
Therefore by H\"older inequality the product $Vu$ belongs to the Lorentz space
\begin{equation}
   Vu \in  L^{q,2}(\R^n)
\end{equation}
with $1/q = 1/p + 1/2^*$ and using $1/2 = 1/2^* + 1/n$. Note that $q=2n/(n+4) > 1$ for $n \geq 5$.

 Thus the convolution
\begin{equation}
w(x) =\int_{\R^n} \frac{ V(y) u(y) }{|x-y|^{n-2}}\mathrm d y
\end{equation}
with the fundamental solution of the Laplacian (which  belongs to $L^{n/(n-2),\infty}(\R^n)$) yields a function
\begin{equation}
   w \in L^{2,2}(\R^n),\qquad  1+ \frac12 = \frac1q + \frac{n-2}{n}.
\end{equation}
As $w=u$ for the maximiser the statement follows.

{\it Part 2.} Let now $n\in\{3,4\}$ and $u\in\dot H^1(\R^n)$ be a non-negative solution to \eqref{eq.M2}.
By Sobolev inequality we know that
\begin{equation}\label{eq.FR0}
   u \in  L^{2^*}(\R^n),
\end{equation}
with $2^*=2n/(n-2)>2$. We use the H\"older inequality to conclude from $1/2^* + 1/q <1$ the local integrability of the product
\begin{equation}\label{eq.FR1}
   Vu \in  L^{1}_{\rm loc}(\R^n)
\end{equation}
such that $u>0$ implies
\begin{equation}
    \int_{|x|\le R} V(x) u(x) \mathrm d x \geq 2 C_0 >0
\end{equation}
for sufficiently large $R$. Using the equation
\begin{equation}
-\Delta u = \varkappa_* V u ,
\end{equation}
we take $|x| \ge 2 R$ and  find
\begin{equation}
\begin{split}
u(x) &=  c\varkappa_* \int_{\R^n} \frac{ V(y) u(y) }{|x-y|^{n-2}}\mathrm d y \\&\geq c\varkappa_*  \int_{ |y| \leq R}\frac{ V(y) u(y)}{|x-y|^{n-2}} \mathrm d y \geq \frac{C_1}{|x|^{n-2}}.
\end{split}
\end{equation}
Since the function $|x|^{2-n}$ is not in $L^2(\{ |x|>2 R \})$ for $n \in\{3,4\}$ we conclude that $u\not\in\mathrm L^2(\R^n)$.
\end{proof}

\begin{proof}[Proof of Theorem \ref{MT2}]
As the first part of previous proof did not require a sign condition on $V$, it suffices to provide the lower bounds for
$n\in \{3,4\}$. Assume that  $u \in  \dot H^1(\R^n)$ is positive. Then  \eqref{eq.FR0}  and \eqref{eq.FR1} with $V_+$ instead of $V$ are valid, so we obtain again
\begin{equation}\label{eq.FR2}
    \int_{|x| \leq R} V_+(x) u(x) \mathrm d x \geq C_0 >0
\end{equation}
for sufficiently large $R$.
The equation
\begin{equation} -\Delta u = \varkappa_* V u ,\end{equation}
can be rewritten as
\begin{equation}\label{eq.FR3}
   -\Delta u + \varkappa_* V_- u = \varkappa_* V_+ u.
\end{equation}
Setting $A = -\Delta + \varkappa_* V_-,$ we can represent the resolvent $(\lambda +A)^{-1}$ for $\lambda >0$ as Laplace transform of the associated semigroup $\e^{-At}$
\begin{equation} (\lambda +A)^{-1} = \int_0^\infty \e^{-\lambda t} \e^{-At} \mathrm d t,\end{equation}
and using the assumptions on $V_-$ we can apply Theorem~1.2 from Zhang's paper \citep{Z00} and see that the kernel $\e^{-tA}(x,y)$ of the heat semigroup associated with $A$ satisfies the lower Gaussian bound
\begin{equation} \e^{-tA}(x,y) \gtrsim \frac{\e^{-c|x-y|^2/t}}{t^{n/2}}. \end{equation}
 Hence, we have
 \begin{equation} (\lambda +A)^{-1}(x,y) \gtrsim  \int_0^\infty \e^{-\lambda t} \e^{-c|x-y|^2/t}  \frac{\mathrm dt}{t^{n/2}} \end{equation}
 and using a change of variables $ t \to s= |x-y|^2/t, $ we find
 \begin{equation}
 \begin{split}
&  \int_0^\infty \e^{-\lambda t} \e^{-c|x-y|^2/t}  \frac{\mathrm d t}{t^{n/2}} \\&= |x-y|^{2-n}
 \int_0^\infty \e^{-\lambda |x-y|^2/s} \e^{-cs}  s^{(n-4)/2}\mathrm d s
 \end{split}
 \end{equation}
 and taking the limit $ \lambda \searrow 0,$ we see that
 \begin{equation}(-\Delta+\varkappa_* V_-)^{-1}(x,y) \geq c |x-y|^{2-n}\end{equation}
 provided the space dimension satisfies $n\ge3$.
 Turning to the equation \eqref{eq.FR3}, we use the lower-bound for the kernel of $(-\Delta+\varkappa_* V_-)^{-1}$ and deduce for $|x|>2R$
\begin{equation}
\begin{split}
 u(x) &\gtrsim  \int_{\R^n} |x-y|^{-(n-2)} V_+(y) u(y) \mathrm d y 
\\&\gtrsim   \int_{ |y| \leq R} |x|^{-(n-2)} V_+(y) u(y) \mathrm d y \gtrsim |x|^{2-n}.
\end{split}
\end{equation}
This lower bound implies that $u$ can not be an element of $\mathrm L^2(\R^n)$ and this completes the proof.
\end{proof}

\begin{proof}[Proof of Theorem \ref{MT3}]
The proof is practically a repetition of the proof of Theorem \ref{MT2}, since the equation
\begin{equation}
\begin{split}
 (-\Delta &+ \varkappa_2 V_{2,-}) u \\
 &= \varkappa_1 |x|^{-2} u(x) + \varkappa_2 V_{2,+}(x) u(x) \\
 &\geq \varkappa_1 |x|^{-2} u(x)
 \end{split}
 \end{equation} combined with the estimate
\begin{equation} (-\Delta + \varkappa_2 V_{2,-})^{-1}(x,y) \gtrsim |x-y|^{2-n}  \end{equation}
is sufficient to obtain a contradiction, assuming that the maximiser of \eqref{eq.M5} is in $L^2(\R^n)$ for $n\in\{3,4\}$.
\end{proof}

\section{Radial potentials and  resonant states}\label{sec2}
We assume now that the  potential $V(x)$ is non-negative, radial and decreasing. Let further $u$ be a  non-negative maximiser of \eqref{eq.M1} and denote by $u_*$ the Schwarz symmetrisation (spherically symmetric decreasing rearrangement) of $u$,
\begin{equation}
  u_*(x) = \sup \left\{ w\ge0 \;:\;   \int_{u(y)\ge w} \mathrm d y \ge  \int_{|y|\le |x|} \mathrm d y  \right\}  .
\end{equation}
Then  the well-known rearrangement inequalities
\begin{equation}
\begin{split}
   \int_{\R^n} V(|x|) u(x)^2 \mathrm d x 
   \le \int_{\R^n} V(|x|) u_*(x)^2 \mathrm d x
   \end{split}
\end{equation}
and
\begin{equation}
   \int_{\R^n} |\nabla u_*(x)|^2\mathrm d x \le  \int_{\R^n} |\nabla u(x)|^2\mathrm d x
\end{equation}
hold true \citep{PS, Kawohl}. 
Hence the variational quotient in \eqref{eq.M1} is increasing when applying Schwarz symmetrisation 
and as $u$ was assumed to be a maximiser it stays constant. Hence a radially symmetric maximiser, i.e., radial a resonant or ground state, exists. 

We can  compute such radial states as solution of the ordinary
differential equation
\begin{equation}\label{eq:ode1}
   (r\partial_r)^2 u + (n-2) r\partial_r u + \varkappa r^2 V(r) u = 0.
\end{equation}
In this section we will discuss how to use an asymptotic integration argument for small and large values of $r$ in order to characterise the first resonant value $\varkappa_*(V) = 1/J(V)$ as first zero of a suitable holomorphic function and provide estimates for it in this way.

Our aim is to apply this to the particular case of the Yukawa potential $V(r) = \e^{-r}/r$, but calculations itself are based on the more general assumption
\begin{equation}\label{eq.H2}
   \int_0^\infty r|V(r)| \mathrm d r < \infty.
\end{equation}
They imply in particular that the potential term in the above Fuchs type differential equation is a small perturbation and the asymptotic behaviour of its solutions is described by solutions to the indicial equation $\rho^2 + (n-2)\rho = 0$, i.e. by $\rho=0$ and $\rho = 2-n$. We assume again $n\ge 3$. Hence, by applying an adapted version of Levinson's theorem  \citep{Eastham, NW15} we obtain a fundamental system of solutions of the form
\begin{equation}
\begin{split}
    u_{\infty,1}(r) &= 1  + \mathbf o(1),\\
     u_{\infty,2}(r) &=  r^{2-n} + \mathbf o(r^{2-n}),\qquad r\to\infty
\end{split}
\end{equation}
near infinity and a fundamental system of solutions of the form
\begin{equation}
\begin{split}
  &  u_{0,1} (r) =1  + \mathbf o(1),\\
  &  u_{0,2}(r) =  r^{2-n}+ \mathbf o(r^{2-n}), \qquad r\to0
\end{split}
\end{equation}
near the origin. The asymptotic behaviour for $r\partial_r u$ is governed by the same terms, i.e. the limits
\begin{equation}
\begin{split}
   \lim_{r\to0} r\partial_r   u_{0,1} (r),&\qquad   \lim_{r\to0} r^{n-1} \partial_r   u_{0,2} (r),\\
   \lim_{r\to\infty} r\partial_r   u_{\infty,1} (r), &\qquad   \lim_{r\to\infty} r^{n-1}\partial_r   u_{\infty,2} (r)
\end{split}
\end{equation}
exist.

In order to single out the ground state we have to find a solution
behaving like $r^{2-n}$ at infinity and being bounded in the origin, otherwise  it is not in $\dot H^1(\R^n)\subset \mathrm L^2_{\rm loc}(\R^n)$. Thus, we are in a radial resonant state
if and only if $\mathbf u_{\rm int}(\cdot;\varkappa)=u_{0,1}$ and $\mathbf u_{\rm ext}(\cdot;\varkappa)=u_{\infty,2}$ are linearly dependent.

%
We use an asymptotic integration argument to construct
these particular solutions.

\begin{lem}\label{lem31}
For each $\varkappa\in\C$ there exists a uniquely determined solution $\mathbf u_{\rm int}(\cdot;\varkappa)$ to \eqref{eq:ode1}
satisfying the asymptotic constraint
\begin{equation}
 \mathbf u_{\rm int}(r;\varkappa) = 1 + \mathbf o(1),\qquad r\to0.
\end{equation}
This solution is entire in $\varkappa$.
\end{lem}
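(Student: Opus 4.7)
The plan is to recast \eqref{eq:ode1} with the normalisation $\mathbf u_{\rm int}(r;\varkappa)\to 1$ as $r\to 0$ as a Volterra integral equation, solve it by Picard iteration whose convergence is driven by hypothesis \eqref{eq.H2}, and read off entireness in $\varkappa$ from the resulting Neumann series. Writing \eqref{eq:ode1} as the perturbation $u'' + \tfrac{n-1}{r}u' = -\varkappa V(r)u$ of the Euler equation, whose fundamental pair is $\{1, r^{2-n}\}$ with Wronskian $(2-n)r^{1-n}$, variation of parameters together with the boundary behaviour at $r=0$ produces the integral equation
\begin{equation*}
   u(r) = 1 + \frac{\varkappa}{n-2}\int_0^r s\left[\left(\frac{s}{r}\right)^{n-2}-1\right]V(s)\,u(s)\,\mathrm ds,
\end{equation*}
and I would take this as my starting point. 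The elementary estimate $|(s/r)^{n-2}-1|\le 1$ for $0\le s\le r$ (valid since $n\ge 3$) is decisive: it bounds the kernel pointwise by the integrable weight $s|V(s)|$.

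For existence I would iterate the operator on the right from the seed $u_0\equiv 1$, obtaining by induction the Gronwall-type estimate
\begin{equation*}
   |u_k(r)-u_{k-1}(r)| \le \frac{1}{k!}\left(\frac{|\varkappa|}{n-2}\int_0^r s|V(s)|\,\mathrm ds\right)^{k}.
\end{equation*}
Assumption \eqref{eq.H2} makes the parenthesis uniformly bounded on $[0,\infty)$, so on every set $\{|\varkappa|\le M\}$ the Neumann series converges absolutely and uniformly in $(r,\varkappa)$; the limit $\mathbf u_{\rm int}(\,\cdot\,;\varkappa)$ is continuous, solves the integral equation, and by two differentiations of the integral representation also satisfies \eqref{eq:ode1}. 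Telescoping the same bound gives $|\mathbf u_{\rm int}(r;\varkappa)-1|\le\exp\bigl(\tfrac{|\varkappa|}{n-2}\int_0^r s|V(s)|\,\mathrm ds\bigr)-1 = \mathbf o(1)$ as $r\to 0$, securing the prescribed asymptotic.

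Uniqueness I would handle by observing that the difference $w$ of two solutions meeting the asymptotic constraint satisfies the homogeneous equation and is $\mathbf o(1)$ near the origin, so the Levinson-type fundamental system $u_{0,1},u_{0,2}$ quoted before the lemma forces $w = \alpha u_{0,1} + \beta u_{0,2}$ with $u_{0,2}(r)\sim r^{2-n}$ blowing up and $u_{0,1}(r)\to 1$; both $\alpha$ and $\beta$ must vanish. Entireness in $\varkappa$ then follows from the structure of the Neumann series: each iterate $u_k(r)$ is a polynomial of degree $k$ in $\varkappa$, the convergence is uniform on compact subsets of $\C$, and Weierstrass's theorem gives that $\varkappa\mapsto\mathbf u_{\rm int}(r;\varkappa)$ is entire for every fixed $r\ge 0$.

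The main technical obstacle I anticipate is ensuring that the Volterra iteration is unconditionally convergent on compact $\varkappa$-sets without any smallness assumption on $\varkappa$; this requires precisely that the singular kernel $s[(s/r)^{n-2}-1]$ be absolutely integrable against $|V|$ down to the origin, which is exactly what hypothesis \eqref{eq.H2} delivers. Once that integrability is in place the remaining calculations are essentially routine.
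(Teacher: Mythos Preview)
Your proposal is correct and follows essentially the same route as the paper: both derive the identical Volterra equation $u(r)=1+\tfrac{\varkappa}{n-2}\int_0^r s\bigl[(s/r)^{n-2}-1\bigr]V(s)u(s)\,\mathrm ds$ (the paper obtains it by two direct integrations of $\partial_r(r^{n-1}\partial_r w)=-\varkappa r^{n-1}V(1+w)$ rather than by variation of parameters) and then appeal to the Neumann series for existence, uniqueness, and entireness in~$\varkappa$. Your write-up is in fact more detailed than the paper's, which simply notes that the Volterra equation has a unique Neumann-series solution that is a power series in~$\varkappa$; your explicit Gronwall bound and separate uniqueness argument via the fundamental system $u_{0,1},u_{0,2}$ are sound additions.
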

\begin{proof}
Making use of the ansatz $ \mathbf u_{\rm int}(r;\varkappa) = 1+ w(r;\varkappa)$ with $w(0;\varkappa)=0$, we obtain from
\begin{equation}
   \partial_r \bigg( r^{n-1} \partial_r w(r)\bigg)+ \varkappa r^{n-1} V(r) \big( 1+w(r) \big) = 0
\end{equation}
by integration
\begin{equation}
   r^{n-1} \partial_r w(r)  +  \varkappa \int_0^r s^{n-1} V(s) \big(1+w(s)\big) \mathrm d s = 0
\end{equation}
and therefore integrating a second time and changing order of integration
\begin{equation}
   w(r) + \varkappa \int_0^r s V(s) \big(1+w(s)\big)  \frac{r^{n-2}-s^{n-2}}{(n-2)r^{n-2}} \mathrm d s = 0.
\end{equation}
This is a Volterra integral equation with unique solution $w$ expressible as Neumann series
for all values of $r>0$ and $\varkappa\in\mathbb C$. As the Neumann series is a power series in $\varkappa$, the solution and hence $\mathbf u_{\rm int}(r,\varkappa)$ is entire in $\varkappa\in\C$.
\end{proof}

\begin{lem}\label{lem32}
For each $\varkappa\in\C$ there exists a uniquely determined solution $\mathbf u_{\rm ext}(\cdot;\varkappa)$ to \eqref{eq:ode1}
satisfying the asymptotic constraint
\begin{equation}
 \mathbf u_{\rm ext}(r;\varkappa) = {r^{2-n}} + \mathbf o(r^{2-n}),\qquad r\to\infty.
\end{equation}
This solution is entire in $\varkappa$.
\end{lem}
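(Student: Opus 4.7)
The plan is to mirror the proof of Lemma~\ref{lem31}, working now near infinity rather than at the origin. Starting from the ansatz $\mathbf u_{\rm ext}(r;\varkappa)=r^{2-n}(1+w(r;\varkappa))$ with $w(r;\varkappa)\to0$ as $r\to\infty$, a direct computation rewrites \eqref{eq:ode1} as
\begin{equation*}
\partial_r\bigl(r^{3-n}\partial_r w(r)\bigr) + \varkappa\, r^{3-n} V(r)(1+w(r)) = 0.
\end{equation*}
Integrating twice from $r$ to $\infty$, using the boundary conditions $w(r)\to 0$ and $r^{3-n}w'(r)\to 0$ at infinity, and exchanging the order of integration, yields the Volterra type integral equation
\begin{equation*}
w(r) + \frac{\varkappa}{n-2}\int_r^\infty s V(s)\bigl(1-(r/s)^{n-2}\bigr)(1+w(s))\,\mathrm d s = 0,
\end{equation*}
whose kernel is non-negative and bounded by $s|V(s)|/(n-2)$ since $0\le 1-(r/s)^{n-2}\le 1$ for $s\ge r$.

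Next I would set $I(r)=\int_r^\infty s|V(s)|\,\mathrm d s$, which is finite by assumption \eqref{eq.H2} and satisfies $I(r)\to 0$ as $r\to\infty$. Introducing the linear operator
\begin{equation*}
K[f](r) = \frac{1}{n-2}\int_r^\infty s V(s)\bigl(1-(r/s)^{n-2}\bigr) f(s)\,\mathrm d s,
\end{equation*}
the equation reads $w+\varkappa K[1]+\varkappa K[w]=0$, and the identity $-I'(s)=s|V(s)|$ gives inductively
\begin{equation*}
|K^k[1](r)|\le \frac{I(r)^k}{(n-2)^k\,k!}\le \frac{I(0)^k}{(n-2)^k\,k!}.
\end{equation*}
Hence the associated Neumann series $w=\sum_{k\ge1}(-\varkappa)^k K^k[1]$ converges absolutely and uniformly in $r\in(0,\infty)$ and on compact subsets of $\varkappa\in\C$, so that $w(\cdot;\varkappa)$ is entire in $\varkappa$ and tends to zero at infinity. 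The function $\mathbf u_{\rm ext}(r;\varkappa)=r^{2-n}(1+w(r;\varkappa))$ then has the required asymptotic behaviour and solves \eqref{eq:ode1}. For uniqueness, any two such solutions produce a difference satisfying the homogeneous version of the integral equation; choosing $R$ large enough that $|\varkappa|I(R)/(n-2)<1$ forces this difference to vanish on $[R,\infty)$, and standard ODE uniqueness propagates the vanishing down to $(0,\infty)$.

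The main obstacle, compared with Lemma~\ref{lem31}, is that here one singles out a solution of a second order ODE by prescribing decay at infinity rather than regularity at the origin; the companion asymptotic profile $u_{\infty,1}\sim 1$ is bounded and must be excluded by the integration procedure. The integrability assumption \eqref{eq.H2} is exactly what makes $I(r)$ finite and vanishing at infinity, and thereby both rules out contamination by $u_{\infty,1}$ through the boundary term $R^{3-n}w'(R)$ and yields the global convergence of the Neumann series that delivers the entire dependence on $\varkappa$.
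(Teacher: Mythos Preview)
Your proof is correct and follows essentially the same route as the paper: the same ansatz $\mathbf u_{\rm ext}=r^{2-n}(1+w)$, the same Volterra integral equation (your kernel $\frac{1}{n-2}\bigl(1-(r/s)^{n-2}\bigr)$ coincides with the paper's $\frac{s^{n-2}-r^{n-2}}{(n-2)s^{n-2}}$), and the same Neumann series argument. You supply more detail than the paper does---the explicit Volterra bound $|K^k[1](r)|\le I(r)^k/((n-2)^k k!)$ and the uniqueness argument---but there is no substantive difference in approach.
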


\begin{proof}
Making the ansatz $u(r)= r^{2-n} (1 +w(r))$ with decaying $w(r)$, the differential equation
\begin{equation}
   \partial_r \bigg( r^{n-1} \partial_r \big( r^{2-n} w(r)\big)\bigg)+ \varkappa r V(r)  \big( 1+w(r) \big) = 0
\end{equation}
rewrites again as
\begin{equation}
   r^{n-1} \partial_r \big( r^{2-n} w(r)\big)
     -  \varkappa \int_r^\infty s V(s) \big(1+w(s)\big) \mathrm d s = 0
\end{equation}
and hence after a second integration as Volterra integral equation
\begin{equation}
  w(r) + \varkappa \int_r^\infty s V(s) \big(1+w(s)\big)  \frac{s^{n-2}-r^{n-2}}{(n-2)s^{n-2}} \mathrm d s = 0.
\end{equation}
after integrating twice. Again its solution is given by a Neumann series which is convergent for all $r>0$ and all parameters $\varkappa\in\C$.
\end{proof}

We denote by
\begin{equation}
\mathcal W_V(\varkappa) =
\begin{vmatrix}  
  \mathbf u_{\rm ext}(1;\varkappa) & \mathbf u_{\rm int}(1;\varkappa)  \\
 \mathbf u_{\rm ext}'(1;\varkappa) &  \mathbf u'_{\rm int}(1;\varkappa)
\end{vmatrix}
\end{equation}
the Wronskian of the two particular solutions just constructed. By definition, we are in a radial resonant state, if both are linearly dependent (as both are the only solutions in the right spaces) and thus if $\mathcal W_V(\varkappa)=0$. The function $\mathcal W_V$ is entire and $\mathcal W_V(0)=n-2\ne0$.

\begin{lem}
Assume $V$ is radial, non-negative and satisfies \eqref{eq.H1} together with \eqref{eq.H2}.
Then the first resonant value $\varkappa_*(V)$ is the smallest real zero of the Wronskian,
\begin{equation}
  \varkappa_*(V) = \min\{\varkappa\in\R \;:\; \mathcal W_V(\varkappa)=0\}.
\end{equation}
\end{lem}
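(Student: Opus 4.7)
The plan is to establish two inclusions: that $\mathcal W_V(\varkappa_*(V))=0$, and that $\mathcal W_V(\varkappa)\neq 0$ for every real $\varkappa<\varkappa_*(V)$.

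For the first, I would combine Lemma~\ref{L1} with the Schwarz symmetrisation discussion at the beginning of Section~\ref{sec2}: under the standing radiality and sign assumptions the maximiser in \eqref{eq.M1} can be chosen radial, non-negative and belonging to $\dot H^1(\R^n)$, and by the Euler--Lagrange equation it solves \eqref{eq.M3} with $\varkappa=\varkappa_*(V)$. Elliptic regularity off the origin makes its radial profile a classical solution of \eqref{eq:ode1}, and it therefore decomposes in the fundamental systems $\{u_{0,1},u_{0,2}\}$ near zero and $\{u_{\infty,1},u_{\infty,2}\}$ near infinity. Membership in $\dot H^1(\R^n)$ kills the $u_{0,2}\sim r^{2-n}$ coefficient (otherwise the radial Dirichlet integral $\int_0 |u'(r)|^2 r^{n-1}\mathrm d r$ would diverge), while the Sobolev embedding $\dot H^1(\R^n)\hookrightarrow L^{2^*}(\R^n)$ kills the $u_{\infty,1}\sim1$ coefficient. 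Thus the maximiser is a non-trivial scalar multiple both of $\mathbf u_{\mathrm{int}}(\cdot;\varkappa_*(V))$ and of $\mathbf u_{\mathrm{ext}}(\cdot;\varkappa_*(V))$, so these two solutions are linearly dependent and $\mathcal W_V(\varkappa_*(V))=0$.

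For the second inclusion I would suppose $\mathcal W_V(\varkappa_0)=0$ with $\varkappa_0<\varkappa_*(V)$ and derive a contradiction. Linear dependence of $\mathbf u_{\mathrm{int}}(\cdot;\varkappa_0)$ and $\mathbf u_{\mathrm{ext}}(\cdot;\varkappa_0)$ produces a non-zero radial solution $u$ of \eqref{eq:ode1} bounded at zero and behaving like $r^{2-n}$ at infinity. Using the Volterra representations of Lemmas~\ref{lem31} and~\ref{lem32} together with the decay \eqref{eq.H1} near the origin one checks that $u\in\dot H^1(\R^n)$, and in particular that $u\in L^2_{\mathrm{loc}}(\R^n)$ with $(1+|x|)^{-a}u\in L^2(\R^n)$ for some $a\in(1/2,3/2)$. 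When $0<\varkappa_0<\varkappa_*(V)$ this contradicts the no-resonance lemma immediately following \eqref{eq.M1}. When $\varkappa_0\le 0$, the decay of $u$ and $\nabla u$ justifies integrating the equation against $u$, giving
\begin{equation*}
\|\nabla u\|_{L^2(\R^n)}^2=\varkappa_0\int_{\R^n} V(x)\,u(x)^2\mathrm d x\le 0,
\end{equation*}
since $V\ge 0$; hence $\nabla u\equiv 0$, so $u$ is constant and then identically zero by its decay at infinity, a contradiction.

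The main technical step I anticipate is the verification that the candidate solution produced in the second paragraph really belongs to $\dot H^1(\R^n)$, which requires controlling $\partial_r u$ near the origin through the Volterra representations and the hypothesis \eqref{eq.H1}. Once this is in place both cases $\varkappa_0\in(0,\varkappa_*(V))$ and $\varkappa_0\le 0$ are excluded by results already proven earlier in the paper, and the two inclusions together yield the claimed characterisation.
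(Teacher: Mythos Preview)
Your proposal is correct and follows essentially the same route as the paper's proof, which is extremely terse: the paper simply observes that zeros of $\mathcal W_V$ produce radial resonant/eigenstate solutions, remarks that for $V\ge 0$ and $\varkappa<0$ the positivity of $-\Delta-\varkappa V$ rules out such solutions (so $\mathcal W_V(\varkappa)>0$ there, given $\mathcal W_V(0)=n-2$), and then writes ``the statement follows.'' Your argument makes explicit the two pieces the paper leaves implicit --- that the radial maximiser at $\varkappa_*(V)$ forces $\mathcal W_V(\varkappa_*(V))=0$, and that Lemma~2 excludes zeros in $(0,\varkappa_*(V))$ --- and your integration-by-parts treatment of $\varkappa_0\le 0$ is the same positivity observation the paper invokes in one phrase. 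The technical verification you flag (that $\mathbf u_{\mathrm{int}}=\mathbf u_{\mathrm{ext}}$ lies in $\dot H^1$) is genuine but routine from the asymptotics of Lemmas~\ref{lem31} and~\ref{lem32}; the paper does not spell it out either.
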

\begin{proof}
By construction $H_\varkappa$ has a zero resonance or zero eigenstate for all $\varkappa$ with $\mathcal W_V(\varkappa)=0$. If $V\ge0$ is non-negative it thus follows that $\mathcal W_V(\varkappa)>0$ for $\varkappa<0$ and the statement follows.
\end{proof}

\section{Estimates for the first Yukawa resonance in $\mathbb R^3$}
In this section we prove Theorem~\ref{thm3} and provide explicit bounds for the first resonance in dimension $n=3$ for the Yukawa potential 
\begin{equation}
   V(r) = \frac{\e^{-r}}r.
\end{equation}
In order to prove them we use the asymptotic integration argument introduced in the previous section for large arguments, but make use of the analyticity of $rV(r)$ and the resulting analyticity of the resonance functions for small values of $r$.

To construct $\mathbf u_{\rm int}(r;\varkappa)$ we use the ansatz
\begin{equation}\label{eq:gb-ans}
  \mathbf u_{\rm int}(r;\varkappa) = 1 + \sum_{k=1}^\infty (-1)^k \alpha_k(\varkappa) r^k
\end{equation}
as power series with still to be determined coefficients $\alpha_k$. Indeed, plugging this ansatz into the differential equation \eqref{eq:ode1} yields (with $\alpha_0(\varkappa)=1$)
\begin{equation}
\begin{split}
   \sum_{k=2}^\infty &(-1)^k \alpha_k(\varkappa) k(k-1) r^{k-1} 
   + 2 \sum_{k=1}^\infty (-1)^k \alpha_k(\varkappa) k r^{k-1}
   \\ &+ \varkappa \left(\sum_{k=0}^\infty \frac{(-1)^k}{k!} r^k \right)\left( \sum_{k=0}^\infty  (-1)^k \alpha_k(\varkappa) r^k\right) = 0
\end{split}
\end{equation}
such that comparing coefficients yields the equations 
\begin{equation}
2\alpha_1(\varkappa)=\varkappa\alpha_0(\varkappa)  =  \varkappa
\end{equation}
 and
\begin{equation}\label{eq:rec1}
  \alpha_{k+1} (\varkappa) = \frac{\varkappa}{ (k+1) (k+2)} \sum_{\ell=0}^k \frac{\alpha_{k-\ell}(\varkappa)}{\ell!}.
\end{equation}
The last formula explains the sign convention used in ansatz \eqref{eq:gb-ans}, all appearing coefficients
$\alpha_k(\varkappa)$ are positive for positive $\varkappa$.
The first few terms are given below
\begin{equation}\label{eq:alf}
\begin{split}
    \alpha_1(\varkappa) &= \frac{\varkappa}{2},\\
     \alpha_2(\varkappa) &= \frac{\varkappa(\varkappa+2)}{12},\\
     \alpha_3(\varkappa) &= \frac{\varkappa(6+8\varkappa+\varkappa^2)}{144},\\
     \alpha_4(\varkappa) & = \frac{\varkappa(24+66\varkappa+20\varkappa^2+\varkappa^3)}{2880}, \\
     \alpha_5(\varkappa) &= \frac{\varkappa(120+624 \varkappa+ 346 \varkappa^2 + 40\varkappa^3 + \varkappa^4)}{86400} ,\\
     \alpha_6(\varkappa) &= \frac{\varkappa(720+6840\varkappa+6204 \varkappa^2 + 1246 \varkappa^3 + 70\varkappa^4 + \varkappa^5)}{3628800}    .
\end{split}
\end{equation}
If we can show that for $k$ sufficiently large (depending on $\varkappa$), the coefficients $\alpha_k(\varkappa)$
and $k\alpha_k(\varkappa)$ are monotonically decreasing in $k$, then the following error bounds
\begin{equation}
\begin{split}
     \sum_{k=0}^{2K-1} (-1)^k\alpha_k(\varkappa) <  \mathbf u_{\rm int}(1;\varkappa) <     \sum_{k=0}^{2K} (-1)^k\alpha_k(\varkappa),\\
      \sum_{k=1}^{2K-1} (-1)^k k \alpha_k(\varkappa) <  \mathbf u_{\rm int}'(1;\varkappa) <     \sum_{k=0}^{2K} (-1)^k k \alpha_k(\varkappa)
\end{split}
\end{equation}
follow directly from Leibniz criterium. To establish this monotonicity we make use of the following lemma.

\begin{lem}\label{lem:dec}
Assume for $k_0$ depending on $\varkappa$
\begin{equation}\label{eq:ind-hyp}
  \alpha_{k_0+1}(\varkappa) < \min_{k\le k_0}\alpha_k(\varkappa).
\end{equation}
Then for all $k\ge k_0$ the estimates
 \begin{equation} \label{eq:dec}
      \alpha_{k+1}(\varkappa)< \alpha_k(\varkappa)
  \end{equation}
  and for $k\ge \max\{k_0, 3\}$
  \begin{equation}\label{eq:dec1}
      (k+1)\alpha_{k+1}(\varkappa)< k\alpha_k(\varkappa)
  \end{equation}
  are valid.
\end{lem}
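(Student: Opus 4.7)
The plan is to prove both inequalities by strong induction on $k$, treating \eqref{eq:dec} first and then using the monotonicity it provides as input for \eqref{eq:dec1}. To set things up I will rewrite the recursion \eqref{eq:rec1} in the form $(k+1)(k+2)\alpha_{k+1}=\varkappa S_k$ with $S_k=\sum_{\ell=0}^k \alpha_{k-\ell}(\varkappa)/\ell!$, and by an index shift obtain the identity
\begin{equation*}
   S_k = S_{k-1}+\alpha_k-R_{k-1},\qquad R_{k-1}:=\sum_{j=0}^{k-2}\frac{(k-1-j)\alpha_j(\varkappa)}{(k-j)!}\ge 0.
\end{equation*}

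Inserting this into $(k+1)(k+2)\alpha_{k+1}-k(k+1)\alpha_k=\varkappa(S_k-S_{k-1})$ and rearranging reduces \eqref{eq:dec} to the pointwise inequality $(\varkappa-2(k+1))\alpha_k<\varkappa R_{k-1}$, which is automatic when $\varkappa\le 2(k+1)$. In the complementary range the strong inductive hypothesis combined with \eqref{eq:ind-hyp} yields $\alpha_j\ge\alpha_k$ for every $j\le k-1$, so that telescoping via $p/(p+1)!=1/p!-1/(p+1)!$ produces
\begin{equation*}
   R_{k-1}\ge\alpha_k\sum_{p=1}^{k-1}\frac{p}{(p+1)!}=\alpha_k\left(1-\frac{1}{k!}\right).
\end{equation*}
What remains is then the comparison $\varkappa<2(k+1)\cdot k!$, which I will secure from the weak consequence $\varkappa<(k_0+1)(k_0+2)$ of \eqref{eq:ind-hyp} (obtained by retaining only the leading term in $S_{k_0}\ge\alpha_{k_0}$), since $(k_0+1)(k_0+2)\le 2(k+1)\cdot k!$ for every $k\ge k_0+1$.

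For \eqref{eq:dec1} I will apply the same reduction scheme to the difference $(k+1)\alpha_{k+1}-k\alpha_k$, arriving at the cleaner equivalent $(\varkappa-k)\alpha_k<\varkappa R_{k-1}$; the identical telescoping bound on $R_{k-1}$ then reduces the nontrivial range to $\varkappa<k\cdot k!$. For $k\ge 4$ this is automatic from the hypothesis, and the remaining delicate case $k=3$ (which can only occur when $k_0\le 3$) will be closed using the tighter estimate $\varkappa^{k_0+1}<(k_0+1)!(k_0+2)!$ coming from $\alpha_{k_0+1}<\alpha_0=1$, combined with the iterated lower bound $\alpha_k\ge\varkappa^k/(k!(k+1)!)$ produced by the same recursion.

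The main obstacle I anticipate is the algebraic bookkeeping needed to isolate $R_{k-1}$ as the correct correction term and to recognise the telescoping structure producing the clean lower bound $\alpha_k(1-1/k!)$; once these are in place, both inductive steps reduce to elementary comparisons between polynomial growth in $k$ and factorial growth of the right-hand sides.
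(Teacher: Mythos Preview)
Your argument is correct and follows a route that is organised differently from the paper's. Both proofs are strong inductions that feed the bound $\alpha_j\ge\alpha_k$ for $j<k$ into the recursion and ultimately rest on the telescoping identity $\sum_{\ell=1}^k(\ell-1)/\ell!=1-1/k!$. The paper, however, subtracts the two instances of \eqref{eq:rec1} while keeping their distinct prefactors $\tfrac{1}{k(k+1)}$ and $\tfrac{1}{(k+1)(k+2)}$ attached; after replacing each $\alpha_{k-\ell}$ by $\alpha_k$ this yields the $\varkappa$-independent reductions
\[
\sum_{\ell=1}^k\frac{(\ell-1)k+2\ell}{\ell!}\ge k
\qquad\text{and}\qquad
\sum_{\ell=1}^k\frac{(\ell-1)k+2\ell-1}{\ell!}\ge k+1,
\]
so no case distinction in $\varkappa$ is required. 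Your decomposition via $S_k-S_{k-1}=\alpha_k-R_{k-1}$ is simpler to set up, but it leaves $\varkappa$ in the reduced inequalities $(\varkappa-2(k+1))\alpha_k<\varkappa R_{k-1}$ and $(\varkappa-k)\alpha_k<\varkappa R_{k-1}$, which then forces you to extract the auxiliary bounds $\varkappa<(k_0+1)(k_0+2)$ and $\varkappa^{k_0+1}<(k_0+1)!(k_0+2)!$ from the hypothesis and to treat $k=3$ separately. Both routes reach the goal; the paper's buys uniformity in $\varkappa$, yours trades that for a single, cleaner remainder term.

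One small caveat (shared with the paper's own proof): the telescoping lower bound on $R_{k-1}$ requires $\alpha_j\ge\alpha_k$ for all $j<k$, which is secured only for $k>k_0$. Hence if $k_0\ge 3$ the boundary case $k=k_0$ of \eqref{eq:dec1} is not covered by either argument; this does not affect the intended application, where $k_0=1$.
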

\begin{proof}
We prove  \eqref{eq:dec} by induction. The hypothesis \eqref{eq:ind-hyp} implies $\alpha_{k_0+1}(\varkappa) < \alpha_{k_0}(\varkappa)$. Let now $k> k_0$ and assume
\eqref{eq:dec} for all intermediate values. In particular  \eqref{eq:ind-hyp}  is true with $k_0$ replaced by $k-1$. Then we obtain from
the relation \eqref{eq:rec1}
\begin{equation}
\begin{split}
  \alpha_{k+1} (\varkappa)&- \alpha_k(\varkappa) \\&= 
   \frac{\varkappa}{ (k+1) (k+2)} \sum_{\ell=0}^k \frac{\alpha_{k-\ell}(\varkappa)}{\ell!} \\
   &\quad-
\frac{\varkappa}{ k (k+1)} \sum_{\ell=1}^{k} \frac{\alpha_{k-\ell}(\varkappa)}{(\ell-1)!} \\
&= \frac{\varkappa}{ (k+1) (k+2)} \alpha_k(\varkappa) - R_k(\varkappa),
\end{split}
\end{equation}
where
\begin{equation}
\begin{split}
    R_k(\varkappa) &=
\frac{\varkappa}{ k (k+1)} \sum_{\ell=1}^{k} \frac{\alpha_{k-\ell}(\varkappa)}{(\ell-1)!}\\
&\quad- \frac{\varkappa}{ (k+1) (k+2)} \sum_{\ell=1}^k \frac{\alpha_{k-\ell}(\varkappa)}{\ell!}.
\end{split}
\end{equation}
We estimate the term $R_k(\varkappa)$ from below based on the induction hypothesis as follows
\begin{equation}
    R_k(\varkappa) \geq
\frac{\varkappa \alpha_k(\varkappa)}{ k (k+1)(k+2)} \sum_{\ell=1}^{k} \frac{(\ell-1)k+2\ell}{\ell!}.
\end{equation}
To show that $\alpha_{k+1}(\varkappa) < \alpha_k(\varkappa)$ it is sufficient to show the inequality
\begin{equation}
\frac{\varkappa \alpha_k(\varkappa)}{ k (k+1)(k+2)} \sum_{\ell=1}^{k} \frac{(\ell-1)k+2\ell}{\ell!} \geq
\frac{\varkappa}{ (k+1) (k+2)} \alpha_k(\varkappa)
\end{equation}
or equivalently
\begin{equation}\label{eq:fac1}
   \sum_{\ell=1}^{k} \frac{(\ell-1)k+2\ell}{\ell!} \geq
k.
\end{equation}
Now we can use the relation
\begin{equation}\label{eq.rel1}
    \sum_{\ell=1}^{k} \frac{(\ell-1)}{\ell!} =\sum_{\ell=1}^{k} \frac{1}{(\ell-1)!}- \sum_{\ell=1}^{k} \frac{1}{\ell!} = 1 - \frac{1}{k!}
\end{equation}
and see that \eqref{eq:fac1} can be rewritten in the form
\begin{equation}
k - \frac{1}{(k-1)!} + 2 \sum_{\ell=1}^{k} \frac{1}{(\ell-1)!} \geq k.
\end{equation}
Since this inequality easily follow from the obvious one
\begin{equation}
  \sum_{\ell=1}^{k} \frac{1}{(\ell-1)!} \geq \frac{1}{(k-1)!},
\end{equation}
we can conclude that \eqref{eq:fac1} is established and so  the inequality \eqref{eq:dec} is proved.

The next step is the proof of \eqref{eq:dec1}. Using the recurrence relation
\eqref{eq:rec1} we obtain
\begin{equation}
\begin{split}
  (k+1)\alpha_{k+1} (\varkappa)&- k\alpha_k(\varkappa) \\
  &=  \frac{\varkappa}{ (k+2)} \sum_{\ell=0}^k \frac{\alpha_{k-\ell}(\varkappa)}{\ell!} \\&\quad-
\frac{\varkappa}{ (k+1)} \sum_{\ell=1}^{k} \frac{\alpha_{k-\ell}(\varkappa)}{(\ell-1)!} \\
&= \frac{\varkappa}{(k+2)} \alpha_k(\varkappa) - \tilde{R}_k(\varkappa),
\end{split}
\end{equation}
where the remainder is given by
\begin{equation}
    \tilde{R}_k(\varkappa) =
\frac{\varkappa}{ (k+1)} \sum_{\ell=1}^{k} \frac{\alpha_{k-\ell}(\varkappa)}{(\ell-1)!}- \frac{\varkappa}{ (k+2)} \sum_{\ell=1}^k \frac{\alpha_{k-\ell}(\varkappa)}{\ell!}.
\end{equation}
Similar to the above reasoning the remainder can be estimated for $k>k_0$ from \eqref{eq:ind-hyp} combined with \eqref{eq:dec}.
This yields
\begin{equation}
    \tilde{R}_k(\varkappa) \geq
\frac{\varkappa \alpha_k(\varkappa)}{ (k+1)(k+2)} \sum_{\ell=1}^{k} \frac{(\ell-1)k+2\ell-1}{\ell!}.
\end{equation}
Hence it is sufficient to show the inequality
\begin{equation}
 \frac{\varkappa \alpha_k(\varkappa)}{(k+1)(k+2)} \sum_{\ell=1}^{k} \frac{(\ell-1)k+2\ell-1}{\ell!} \geq
\frac{\varkappa}{ (k+2)} \alpha_k(\varkappa)
\end{equation}
or equivalently
\begin{equation}\label{eq:fac1a}
   \sum_{\ell=1}^{k} \frac{(\ell-1)k+2\ell-1}{\ell!} \geq
k+1.
\end{equation}
We can use again the relation \eqref{eq.rel1}
and see that \eqref{eq:fac1a} can be rewritten in the form
\begin{equation}
 k - \frac{1}{(k-1)!} +  \sum_{\ell=1}^{k} \frac{2\ell-1}{\ell!} \geq k+1.
 \end{equation}
If we assume that $k\ge 3$ in addition to $k<k_0$ it follows from the obvious estimate
\begin{equation}
\sum_{\ell=1}^{k} \frac{2\ell-1}{\ell!} \geq 1+\frac{1}{(k-1)!}  , \ \ k \geq 3,
\end{equation}
we can conclude that \eqref{eq:fac1a} is established and so  the lemma is proven.
\end{proof}

As the function $\mathbf u_{\rm int}(r;\varkappa)$ is entire in $r$  we know that for any fixed $\varkappa\ge0$
\begin{equation}
  \lim_{k\to\infty} \sqrt[k]{\alpha_k(\varkappa)} = 0.
\end{equation}
In particular, it follows $\alpha_k(\varkappa)\to0$ as $k\to\infty$ and thus the assumption \eqref{eq:ind-hyp} is satisfied at some point $k_0$. For small values of $\varkappa$ estimates for the number $k_0$ can be obtained explicitly.

\begin{cor}
 Assume $\varkappa\in(0,\sqrt{13}-1)$. Then \eqref{eq:dec}  and \eqref{eq:dec1} are both valid for all $k\ge 3$.
\end{cor}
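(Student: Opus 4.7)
The plan is to apply Lemma~\ref{lem:dec} with the choice $k_0 = 2$. With this choice the lemma yields \eqref{eq:dec} for every $k \ge 2$ and \eqref{eq:dec1} for every $k \ge \max\{2,3\} = 3$, which in particular contains the two assertions of the corollary. I therefore only need to check the base-case hypothesis
\begin{equation*}
  \alpha_3(\varkappa) < \min\{\alpha_0(\varkappa), \alpha_1(\varkappa), \alpha_2(\varkappa)\}
\end{equation*}
for every $\varkappa \in (0, \sqrt{13}-1)$.

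Using the explicit expressions in \eqref{eq:alf}, I would establish the chain
\begin{equation*}
  \alpha_3(\varkappa) < \alpha_2(\varkappa) < 1 = \alpha_0(\varkappa), \qquad \alpha_3(\varkappa) < \alpha_1(\varkappa),
\end{equation*}
which yields the required minimum. Dividing each inequality by the common positive factor $\varkappa$ and clearing the remaining denominators, every comparison collapses to an elementary quadratic in $\varkappa$. Concretely, $\alpha_2 < \alpha_0$ is equivalent to $\varkappa^2 + 2\varkappa - 12 < 0$, whose positive root is exactly $\sqrt{13}-1$; $\alpha_3 < \alpha_2$ reduces to $\varkappa^2 - 4\varkappa - 18 < 0$ with positive root $2+\sqrt{22} \approx 6.69$; and $\alpha_3 < \alpha_1$ reduces to $\varkappa^2 + 8\varkappa - 66 < 0$ with positive root $\sqrt{82} - 4 \approx 5.06$. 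All three quadratic conditions are therefore comfortably satisfied on $(0, \sqrt{13}-1)$.

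I do not expect any substantial obstacle: the entire argument is a direct unpacking of \eqref{eq:alf} followed by a black-box application of Lemma~\ref{lem:dec}. The one conceptual point worth emphasising is that the sharp threshold $\sqrt{13}-1$ appearing in the statement of the corollary is singled out precisely by the weakest of the three inequalities above, namely $\alpha_2 < \alpha_0$; the other two comparisons remain valid on a strictly larger interval.
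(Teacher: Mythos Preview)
Your proof is correct and follows the same strategy as the paper---verify hypothesis \eqref{eq:ind-hyp} of Lemma~\ref{lem:dec} using the explicit coefficients from \eqref{eq:alf}---but with a different base index. The paper takes $k_0=1$ and checks only the two inequalities $\alpha_2(\varkappa)<\alpha_0(\varkappa)$ and $\alpha_2(\varkappa)<\alpha_1(\varkappa)$, which amount to $\varkappa(\varkappa+2)/12<1$ and $(\varkappa+2)/6<1$; the first of these is precisely what produces the threshold $\sqrt{13}-1$. Your choice $k_0=2$ works just as well but needs three comparisons, and note that the number $\sqrt{13}-1$ enters your argument only because you routed $\alpha_3<\alpha_0$ through the intermediate step $\alpha_2<\alpha_0$; the direct inequality $\alpha_3(\varkappa)<1$ in fact holds on a strictly larger interval (up to roughly $\varkappa\approx 3.3$), so your closing remark about sharpness reflects your particular chain rather than the hypothesis \eqref{eq:ind-hyp} at $k_0=2$ itself.
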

\begin{proof}
$\alpha_2(\varkappa)\le 1$ is equivalent to $(\varkappa+2)/6<1$, while $\alpha_2(\varkappa)<\alpha_1(\varkappa)$ means $\varkappa(\varkappa+2)/12<1$. Both inequalities are valid for the given range of $\varkappa$.
\end{proof}

To construct $\mathbf u_{\rm ext}(r;\varkappa)$ we use asymptotic integration, but give the appearing integrals explicitly. This yields
\begin{equation}\label{eq:hb-ans}
  \mathbf u_{\rm ext}(r;\varkappa) = \frac1r \left( 1+ \sum_{k=1}^\infty (-1)^k \varkappa^k \omega_{k}(r) \right),
\end{equation}
where $\omega_k(r)$ is defined recursively by
\begin{equation}\label{eq:hb-k-1}
  r\omega_{1}''(r) =\e^{-r},\qquad \omega_{1}(r)\to0
\end{equation}
and
\begin{equation}
 r \omega_{k+1}''(r) = \e^{-r}\omega_{k}(r) ,\qquad \omega_{k+1}(r)\to0,
\end{equation}
limits taken as $r\to\infty$.
It turns out that all appearing functions are positive, $\omega_{k}(r)>0$. Again this explains the sign convention used in ansatz \eqref{eq:hb-ans}. Furthermore, the functions $\omega_k(r)$ are independent of $\varkappa$.

\begin{lem}
For any $r\ge 1$ the estimates
\begin{equation}
   0 < \omega_{k}(r) \le \frac{\e^{-kr}}{(k!)^2}
\end{equation}
and
\begin{equation}
\frac{-\e^{-kr}}{k!(k-1)!} <     \omega_{k}' (r) <0
\end{equation}
hold true.
\end{lem}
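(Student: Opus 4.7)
The plan is to prove both estimates simultaneously by induction on $k$, exploiting that $1/s\le 1$ for $s\ge 1$ in combination with the recursive definition of $\omega_{k}$ and the explicit factorials gained from integrating the exponential $\e^{-ks}$.

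First, I would handle the base case $k=1$ directly. Since $\omega_1(r)\to 0$ as $r\to\infty$ implies also $\omega_1'(r)\to 0$, integration of $\omega_1''(s)=\e^{-s}/s$ from $r$ to $\infty$ gives
\begin{equation}
  -\omega_1'(r) = \int_r^\infty \frac{\e^{-s}}{s}\,\mathrm d s \le \int_r^\infty \e^{-s}\,\mathrm d s = \e^{-r},
\end{equation}
where the inequality uses $1/s\le 1$ for $s\ge r\ge1$. A second integration, together with positivity of $\omega_1'$ in absolute value and the bound just obtained, yields $0<\omega_1(r)\le \e^{-r}$, which matches the claim since $(1!)^2=1$.

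For the inductive step, assume $0<\omega_k(r)\le\e^{-kr}/(k!)^2$. From the recursion
\begin{equation}
  \omega_{k+1}''(r) = \frac{\e^{-r}\omega_k(r)}{r}>0,
\end{equation}
together with $\omega_{k+1}'(r)\to 0$, one obtains $\omega_{k+1}'(r)<0$. Integrating once and inserting the inductive bound on $\omega_k$ gives
\begin{equation}
  -\omega_{k+1}'(r) = \int_r^\infty \frac{\e^{-s}\omega_k(s)}{s}\,\mathrm d s \le \frac{1}{r(k!)^2}\int_r^\infty \e^{-(k+1)s}\,\mathrm d s = \frac{\e^{-(k+1)r}}{r(k!)^2(k+1)},
\end{equation}
where both the factor $1/s\le 1/r$ and $r\ge 1$ are used. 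This gives $|\omega_{k+1}'(r)|\le \e^{-(k+1)r}/((k+1)!\,k!)$, which is exactly the desired derivative bound at level $k+1$. Integrating this bound once more from $r$ to $\infty$ yields
\begin{equation}
  0<\omega_{k+1}(r) \le \frac{1}{(k+1)!\,k!}\int_r^\infty \e^{-(k+1)s}\,\mathrm d s = \frac{\e^{-(k+1)r}}{((k+1)!)^2},
\end{equation}
closing the induction.

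I do not anticipate any real obstacle; the main thing to watch is the factorial bookkeeping, in particular the extra factor of $1/(k+1)$ produced when integrating $\e^{-(k+1)s}$, which combines with the inductive $1/(k!)^2$ to exactly give $1/((k+1)!\,k!)$ for the derivative estimate and then $1/((k+1)!)^2$ for $\omega_{k+1}$ itself. The positivity statements propagate for free from the sign of $\e^{-r}\omega_k(r)/r$ and the boundary behaviour at infinity, so once the factorial matching is verified, the estimate is immediate.
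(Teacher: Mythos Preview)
Your proof is correct and follows essentially the same induction argument as the paper: both use $1/s\le 1$ for $s\ge 1$ together with the recursive double-integral representation $\omega_{k+1}(r)=\int_r^\infty\int_s^\infty \frac{\e^{-t}}{t}\omega_k(t)\,\mathrm dt\,\mathrm ds$ to pick up the extra factor $1/(k+1)^2$ at each step. The only cosmetic difference is that you first bound $\omega_{k+1}'$ and then integrate to get $\omega_{k+1}$, whereas the paper does the double integral for $\omega_{k+1}$ directly and remarks that the derivative estimate ``follows analogously (doing one integration less).''
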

\begin{proof}
   For $k=1$ we obtain from \eqref{eq:hb-k-1} that
   \begin{equation}
        \omega_{1}(r) =   \int_r^\infty \int_s^\infty \frac{\e^{-t}}t\mathrm d t\mathrm d s
   \end{equation}
   such that clearly $\omega_{1}(r)>0$ for all $r>0$ and furthermore for $r\ge 1$
   \begin{equation}
        \omega_{1}(r) \le   \int_r^\infty \int_s^\infty \e^{-t} \mathrm d t\mathrm d s =  \e^{-r}.
   \end{equation}
   The remaining estimates follow by induction. Indeed
    \begin{equation}
        \omega_{k+1}(r) =   \int_r^\infty \int_s^\infty \frac{\e^{-t}}t \omega_{k}(t) \mathrm d t\mathrm d s > 0
   \end{equation}
   and
     \begin{equation}
     \begin{split}
        \omega_{k+1}(r) &\le     \int_r^\infty \int_s^\infty {\e^{-t}} \omega_{k}(t) \mathrm d t\mathrm d s\\
       & \le \frac{1}{(k!)^2}  \int_r^\infty \int_s^\infty {\e^{-(k+1)t}} \mathrm d t\mathrm d s\\
       & = \frac{\e^{-(k+1)r}}{((k+1)!)^2}.
        \end{split}
   \end{equation}
   The estimate for the derivative follows analogously (doing one integration less).
\end{proof}
If we can show that for large enough $k$ the sequences $\omega_{k}(1)$ and
$-\omega_{k}'(1)$ are monotonically decreasing, then the Leibniz argument gives again sharp error estimates (always by the next summand).
The integrals defining $\omega_{k}$ can be evaluated in terms of special functions. We will make use of the exponential integral
\begin{equation}
     \mathrm E_\nu(r) = \int_1^\infty \frac{\e^{-rs}}{s^\nu}\mathrm d s, 
\end{equation}
defined in this way for all $\nu\in\C$ and $\Re r>0$. It satisfies
\begin{equation}
     \mathrm E_1(r) = \int_r^\infty \frac{\e^{-s}}s\mathrm d s, \qquad
    \mathrm E_{\nu+1}'(r) = - \mathrm E_\nu(r).
\end{equation}
For $k=1$ we thus obtain
\begin{equation}
\begin{split}
&  \omega_{1}'(r) = -  \mathrm E_1(r),\\
&   \omega_{1} (r)  =  \mathrm E_2(r)=\e^{-r} - r \mathrm E_1(r).
\end{split}
\end{equation}
For $k=2$ we will make use of the recurrence relation $\nu\mathrm E_{\nu+1}(r) = \e^{-r} - r \mathrm E_\nu(r)$ and obtain
\begin{equation}
\begin{split}
   \omega_{2}'(r) &= -\int_r^\infty \frac{\e^{-s}}s \mathrm E_2(s) \mathrm d s \\&= - \int_r^\infty \left( \frac{\e^{-2s}}s - \e^{-s} \mathrm E_1(s) \right) \mathrm d s 
   \\&= \e^{-r} \mathrm E_1(r)-2 \mathrm E_1(2r)
\end{split}
\end{equation}
based on
\begin{equation}
\begin{split}
  \int_r^\infty \e^{-s} \mathrm E_1(s)\mathrm d s &= \e^{-r}\mathrm E_1(r) - \int_r^\infty \frac{\e^{-2s}}s \mathrm d s\\
  &= \e^{-r} \mathrm E_1(r) - \mathrm E_1(2r).
\end{split}
\end{equation}
Integrating again yields
\begin{equation}
\begin{split}
   \omega_2(r) &= \int_r^\infty (2\mathrm E_1(2s) - \e^{-s}\mathrm E_1(s)) \mathrm d s
   \\&= \mathrm E_2(2r) +  \mathrm E_1(2r) - \e^{-r}\mathrm E_1(r) 
   \\&= \e^{-2r} - \e^{-r}\mathrm E_1(r) + (1-2r)  \mathrm E_1(2r).
 \end{split}
\end{equation}
For $k\ge 3$ the functions $\omega_k(r)$ are not expressible in terms of (known) special functions. For the following statement we formally set $\omega_0(r)=1$.

\begin{lem}
Assume that for a number $k_1\ge1$ depending on $\varkappa$ the estimate $\varkappa\omega_{k_1}(r)<\omega_{k_1-1}(r)$ is valid for all $r\ge1$. Then
for all $k\ge k_1(\varkappa)$ the estimates
\begin{equation}\label{eq:om-mon}
\begin{split}
   \varkappa \omega_{k+1}(r) < \omega_k(r),\\  \omega_k'(r)<\varkappa \omega_{k+1}'(r)<0
\end{split}
\end{equation}
are true.
\end{lem}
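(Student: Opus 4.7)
The plan is to run a direct induction on $k\ge k_1$, using the integral representations of $\omega_{k+1}$ and $\omega_{k+1}'$ in terms of $\omega_k$ that are implicit in the defining recursion. Since $\omega_{k+1}(r)\to 0$ and $r\omega_{k+1}''=\e^{-r}\omega_k$, one integration from $r$ to $\infty$ gives
\begin{equation}
   \omega_{k+1}'(r) = -\int_r^\infty \frac{\e^{-t}}{t}\omega_{k}(t)\,\mathrm d t,
\end{equation}
and a second integration yields
\begin{equation}
   \omega_{k+1}(r) = \int_r^\infty \int_s^\infty \frac{\e^{-t}}{t}\omega_{k}(t)\,\mathrm d t\,\mathrm d s.
\end{equation}
For $k=0$ the formal convention $\omega_0\equiv 1$ is consistent with the definition of $\omega_1$, so both formulas remain valid at the base of the induction.

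For the base case $k=k_1$, I subtract the two representations and use the hypothesis $\varkappa\omega_{k_1}(t)<\omega_{k_1-1}(t)$ on $[1,\infty)$:
\begin{equation}
\begin{split}
\varkappa\omega_{k_1+1}(r) - \omega_{k_1}(r) &= \int_r^\infty\!\!\int_s^\infty \frac{\e^{-t}}{t}\bigl[\varkappa\omega_{k_1}(t)-\omega_{k_1-1}(t)\bigr]\mathrm d t\,\mathrm d s\\
&< 0
\end{split}
\end{equation}
whenever $r\ge 1$, since the bracket is negative on the domain of integration and the kernel $\e^{-t}/t$ is positive. This gives the first inequality in \eqref{eq:om-mon} at $k=k_1$. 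Differentiating (equivalently, doing only one integration) gives
\begin{equation}
   \varkappa\omega_{k_1+1}'(r)-\omega_{k_1}'(r) = \int_r^\infty \frac{\e^{-t}}{t}\bigl[\omega_{k_1-1}(t)-\varkappa\omega_{k_1}(t)\bigr]\mathrm d t > 0,
\end{equation}
while $\omega_{k_1+1}'(r)<0$ follows from the positivity of $\omega_{k_1}$ established in the previous lemma, producing the chain $\omega_{k_1}'(r)<\varkappa\omega_{k_1+1}'(r)<0$.

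The inductive step is identical in shape: assuming $\varkappa\omega_{k+1}(t)<\omega_{k}(t)$ for all $t\ge 1$, one applies the same positive-kernel double integral to conclude $\varkappa\omega_{k+2}(r)<\omega_{k+1}(r)$, and then the single integration yields $\omega_{k+1}'(r)<\varkappa\omega_{k+2}'(r)<0$. No step presents any real obstacle; the only thing one must be careful about is that the relevant inequality between $\omega_{k-1}$ and $\varkappa\omega_k$ has already been established at the current induction level before invoking it to deduce the derivative bound at that same level. Since the main inequality is proved first and the derivative bound is a formal corollary of the same integrated identity, the ordering is automatic.
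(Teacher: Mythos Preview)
Your argument is correct and follows exactly the paper's approach: an induction on $k$ using the double-integral representation $\omega_{k+1}(r)=\int_r^\infty\!\int_s^\infty \e^{-t}t^{-1}\omega_k(t)\,\mathrm dt\,\mathrm ds$ to propagate the inequality $\varkappa\omega_k<\omega_{k-1}$, with the derivative bound obtained by performing only one of the two integrations. The paper's proof is essentially the one-line version of your more detailed base case plus inductive step.
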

\begin{proof}
An  induction argument implies
\begin{equation}
\begin{split}
    \varkappa \omega_{k+1}(r) &=  \varkappa \int_r^\infty \int_s^\infty \frac{\e^{-t}}{t} \omega_k(t) \mathrm d t\mathrm d s\\& <  \int_r^\infty \int_s^\infty \frac{\e^{-t}}{t} \omega_{k-1}(t) \mathrm d t\mathrm d s = \omega_k(r)
    \end{split}
\end{equation}
from $\varkappa \omega_k(r)<\omega_{k-1}(r)$ and the statement is proven. The second estimate follows by just doing one integration.
\end{proof}

\begin{cor}
For $\varkappa\in(0,6.7)$ the estimate \eqref{eq:om-mon} holds true for all $k\ge 0$ and $r\ge 1$.
\end{cor}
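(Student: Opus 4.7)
The strategy is to apply the preceding lemma with the smallest admissible index, $k_1=1$. This reduces the whole statement to verifying a single base-case inequality on $[1,\infty)$, namely
\begin{equation*}
  \varkappa\,\omega_1(r) < \omega_0(r) = 1.
\end{equation*}
Once that is in hand, the lemma delivers both estimates of \eqref{eq:om-mon} for every $k\ge 1$; the $k=0$ case of the first inequality is the base case itself, and the derivative inequality at $k=0$ is interpreted via the convention $\omega_0\equiv 1$.

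To check the base case I plan to exploit the explicit formula $\omega_1(r)=\mathrm E_2(r)$ derived above together with $\omega_1'(r)=-\mathrm E_1(r)<0$. Since $\omega_1$ is therefore strictly decreasing on $(0,\infty)$, its supremum over $[1,\infty)$ is attained at $r=1$ and equals $\omega_1(1) = \mathrm E_2(1) = \e^{-1} - \mathrm E_1(1)$. The base case thus collapses to the numerical inequality
\begin{equation*}
   \varkappa < 1/\mathrm E_2(1),
\end{equation*}
and the role of the constant $6.7$ in the statement is simply that it sits (just) below $1/\mathrm E_2(1)\approx 6.73$.

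The only substantive remaining step is therefore to certify $6.7\cdot\mathrm E_2(1) < 1$, i.e.\ $\mathrm E_2(1) < 1/6.7$. I would handle this rigorously via the classical convergent alternating series representation
\begin{equation*}
   \mathrm E_1(1) = -\gamma + \sum_{k=1}^\infty \frac{(-1)^{k+1}}{k\cdot k!},
\end{equation*}
truncated at a moderate order, using the Leibniz tail estimate to pin $\mathrm E_1(1)$ between two explicit rationals, and combining this with a rational two-sided bound for $\e^{-1}$ coming from a truncation of the Taylor series of $\e^{-1}$. This numerical certification of $\mathrm E_2(1)$ is the only obstacle; everything else is a direct application of the previous lemma.
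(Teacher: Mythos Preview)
Your approach is essentially identical to the paper's: apply the preceding lemma with $k_1=1$, reduce the base case $\varkappa\,\omega_1(r)<1$ on $[1,\infty)$ to its value at $r=1$ by monotonicity of $\omega_1=\mathrm E_2$, and then check the single numerical inequality $\mathrm E_2(1)<1/6.7$. The paper simply asserts this last inequality without further justification; your added remarks on certifying it via a truncated alternating series are extra care (though note that invoking $\gamma$ introduces another constant requiring rigorous bounds---a direct integral or continued-fraction bound on $\mathrm E_2(1)$ would be cleaner).
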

\begin{proof}
Since $\mathrm E_2(r)$ is decreasing in $r$ it suffices to check that $\mathrm E_2(1) < 1/6.7$, which is true.
\end{proof}

\begin{proof}[Proof of Theorem~\ref{thm3}]
The first zero resonance appears at the first positive zero of the Wronskian
\begin{equation}
\mathcal W(\varkappa)
= \mathbf u_{\rm ext}(1;\varkappa) \mathbf u_{\rm int}'(1;\varkappa) -
\mathbf  u_{\rm int}(1;\varkappa) \mathbf u_{\rm ext}'(1;\varkappa)
\end{equation}
and the proof follows by estimating this function from below and from above. For this we collect some useful numbers,
\begin{equation}
\begin{split}
  \omega_{1}(1) =  \mathrm E_2(1)  \approx  0.148496..., \\
    \omega_{1}'(1) = -\mathrm E_1(1) \approx -0.219384...
\end{split}
\end{equation}
and
\begin{equation}
\begin{split}
    \omega_2(1) &=  \mathrm E_2(2) +  \mathrm E_1(2) - \e^{-1}\mathrm E_1(1) \approx 0.00572793... ,\\ \omega_2'(1) &= \e^{-1} \mathrm E_1(1)-2 \mathrm E_1(2)\approx -0.0170942...
 \end{split}
\end{equation}
Using the monotonicity of $\varkappa^k\omega_k(1)$ and $\varkappa^k \omega_k'(1)$ for $\varkappa\in(0,6.7)$ as well as the monotonicity of $\alpha_k(\varkappa)$, $k\alpha_k(\varkappa)$ shown for $\varkappa\in(0,2.6)$  we get the following rough lower
\begin{equation}
\begin{split}
   \mathcal W(\varkappa) &= \mathbf u_{\rm ext}(1;\varkappa) \mathbf u_{\rm int}'(1;\varkappa) -
\mathbf  u_{\rm int}(1;\varkappa) \mathbf u_{\rm ext}'(1;\varkappa) \\ &>   \bigg(1 - \varkappa \omega_1(1) \bigg) \left(\sum_{k=0}^7 k \alpha_k(\varkappa) \right)\\&\quad -  \bigg( -1 + \varkappa \omega_1(1) - \varkappa \omega'_1(1) \bigg)  \left(\sum_{k=0}^8 \alpha_k(\varkappa) \right)
\end{split}
\end{equation}
and upper bound
\begin{equation}
\begin{split}
   \mathcal W(\varkappa) &= \mathbf u_{\rm ext}(1;\varkappa) \mathbf u_{\rm int}'(1;\varkappa) -
\mathbf  u_{\rm int}(1;\varkappa) \mathbf u_{\rm ext}'(1;\varkappa) \\
<&
   \bigg( 1 - \varkappa \omega_1(1) + \varkappa^2 \omega_2(1)   \bigg) \left(\sum_{k=0}^8 k \alpha_k(\varkappa) \right) \\
& -\bigg( -1 + \varkappa \omega_1(1) - \varkappa^2 \omega_2(1) - \varkappa \omega'_1(1) + \varkappa^2 \omega_2'(1)  \bigg) \\
&\times \left(\sum_{k=0}^7 \alpha_k(\varkappa) \right)
\end{split}
\end{equation}
by polynomials in $\varkappa$. The first resonant value $\varkappa_*$ lies therefore between the first zeros of these polynomials, a numerical calculation gives
the desired estimate $1.67626<\varkappa_*<1.68742$.
\end{proof}
The accuracy of this rough estimate can be increased to any precision by using more terms of the series.


\begin{acknowledgements}
V.~Georgiev was supported in part by  INDAM, GNAMPA - Gruppo Nazionale per l'Analisi Matematica, la Probabilita e le loro Applicazion, by Institute of Mathematics and Informatics, Bulgarian Academy of Sciences and by Top Global University Project, Waseda University.
\end{acknowledgements}

\bibliography{Final-Resonances}
\end{document}